\providecommand{\norm}[1]{\lVert#1\rVert}
\newtheorem{lem}{Lemma}
\newtheorem{Prop}{Proposition}
\numberwithin{equation}{section}
\newcommand{\R}{\mathbb R}
\newcommand{\E}{\mathbb E}
\newcommand{\V}{\mathrm Var}
\newcommand{\C}{\mathrm Cov}
\begin{document}

\noindent  {\Large \bf On the strong uniform consistency for relative error of the regression function estimator for censoring times series model}
\vspace{1cm}
\date{}
\noindent  \textsc{\bf {BOUHADJERA Feriel. $^{1,\, 2}$}} \\
\noindent  {\it $^{1}$ Universit\'e  Badji-Mokhtar, Lab. de Probabilit\'es  et  Statistique.  BP 12, 23000 Annaba, Alg\'erie.}\\
\noindent \textsc{\bf  OULD  SA\"ID Elias.} {\sf  Corresponding author. }\\
\noindent {\it $^{2}$  Universit\'e du  Littoral C\^ote d'Opale. Lab. de Math. Pures et Appliqu\'ees. IUT de Calais. 19, rue Louis David. Calais, 62228, France.}\\
%



\vspace*{.5in}




\begin{abstract}
Consider a random vector $(X,T)$, where $X$ is $d$-dimensional and $T$ is one-dimensional. We suppose that the random variable $T$ is subject to random right censoring and satisfies the $\alpha$-mixing property. The aim of this paper is to study the behavior of the kernel estimator of the relative error regression and to establish its uniform almost sure consistency with rate. Furthermore, we have highlighted the covariance term which measures the dependency. The simulation study shows that the proposed estimator performs well for a finite sample size in different cases.
\vspace*{.2in}

\keywords{Censored data \and Kernel estimate \and Relative error regression \and Strong mixing condition \and Uniform almost sure consistency.}
\end{abstract}
\vspace*{.3in}

\section{Introduction}
\label{intro}
Let $T$ be a strictly positive random variable (r.v.) representing the survival time of an individual taking part to an experimental study and let $X$  be a vector of covariate taking values in $\R^d$ that gives us information about the individuals (age, sex, \dots). This paper is concerned with the nonparametric regression model: $T=m(X)+\varepsilon,$ where $m$ is a regression function and $\varepsilon$ is a r.v. (corresponding to the residual) such that $\E[\varepsilon|X]=0$. Recall that, $m(\cdot)$ is usually modeled by the following minimization  problem 
 $\E\big[\big (T-m(X)\big)^2\big| X\big]$. However this loss function is unsuitable when the data contains some outliers, which is a relatively common case in practice. To avoid this drawback, another approach is to build an efficient estimator of $m(\cdot)$ given by the minimization of the mean squared relative error  given by
\begin{equation}\label{MSRE}
\displaystyle \E\left[ \Big(\frac{T-m(X)}{T} \Big)^2\Big|X\right], \;\;\;\text{for} \; T>0
\end{equation}
This kind of model  is called relative error regression (RER)  which has been studied by many authors. We can refer to \hyperref[Narula]{Narula and Wellington (1977)}, \hyperref[Makridakis]{Makridakis {\it et al.} (1984)} in the parametric case. Recall that  \hyperref[Park]{Park and Stefanski (1998)} showed that the solution of (\ref{MSRE}) satisfies, for $x \in \R^d$,  
\begin{equation}
\displaystyle \frac{\E \big[T^{-1}|X=x\big]}{\E \big[T^{-2}|X=x\big]} \leq \E \big[T|X=x\big].
\end{equation}
In the nonparametric analysis, there exist some papers dealing with the estimation of the RER. Without pretending to exhaustivity, we quote
\hyperref[Jones]{Jones {\it et al.} (2008)}   considered kernel and local linear approach to estimate the regression function, \hyperref[Attouch]{Attouch {\it et al.} (2015)} regarded the problem of estimating the regression function for spatial data and \hyperref[Demongeot]{Demongeot {\it et al.} (2016)} considered the case where the explanatory variable are of functional type of data. \hyperref[Shen]{Shen and Xie (2013)} obtained the strong consistency of the regression estimator under $\alpha$-mixing data. Their result has  been generalized to dependent case by \hyperref[Li]{Li {\it et al.} (2016)}.\\ 
In the case of incomplete data, for independent and identically distributed (i.i.d.) random variables under random right censoring, \hyperref[Guessoum1]{Guessoum and Ould Said (2008)} studied the consistency and asymptotic normality of the kernel estimator of the regression function.  \\ 
Many statistical results have been established by considering independent samples. 
It is then interesting to consider the more realistic situation when the observation are no longer i.i.d. This is for exemple the case, in clinical trials studies, not infrequently, patients from the same hospital have correlated survival times, due  to unmeasured variables such as the skill or training of the staff or the quality of the hospital equipment (for more details, see : \hyperref[lipsitz]{Lipsitz and Ibrahim, 2000}).  \\
Few papers deal with the regression function under censoring in the dependent case. We can cite \hyperref[Cai1]{Cai (1998)} who studied the asymptotic properties of Kaplan-Meier's estimator of censored dependent data and \hyperref[Cai2]{Cai (2001)} who addressed the estimation of the distribution function for censored time series data. \hyperref[Ghouch]{El Ghouch and Van Keilegom (2008, 2009)} estimated the regression and conditional quantile  functions by applying local linear method. \hyperref[Guessoum]{Guessoum and Ould Said (2010, 2012)} studied the consistency and the asymptotic normality of the kernel estimator for the regression function for censored dependent data. \\ 
Here we derive the uniform consistency result over a compact set with rate  of the RER estimator for dependent case and censored data by highlighting the  covariance term which does not appear in many papers. This paper is organized as follows. In  \hyperref[Sec2]{Section\,2}, we recall some notations and definitions needed in our model. The hypotheses and main result are given in \hyperref[Sec3]{Section\,3}. Simulations study are drawn in \hyperref[Sect4]{Section\,4}. Finally, the proofs are relegated to \hyperref[Sect5]{Section\,5} with some auxiliary results.
\section{Presentation of the model}\label{Sec2}
Consider a randomly right-censored  model given by two non-negative stationary sequences,  $(T_i)_{1 \leq i \leq n}$ which represents the survival time with common unknown absolutely continuous distribution function (d.f.) $F$ and $(C_i)_{1 \leq i \leq n}$ the censoring time with common unknown d.f. $G$. In this context, we observe the pairs $(Y_i,\delta_i)$, where 
\[Y_i=T_i \wedge C_i ,\;\;\;\; \;\;\;\; \delta_i=\mathds{1}_{\{T_i \leq C_i\}},\;\; i=1,\dots,n\]
and  $\mathds{1}_E$ denotes the indicator function of the set $E$.  Let $(X_i)_{1 \leq i \leq n} $ be a sequence of copies of the random vector $X \in \R^d$ and denote by $X_{1},\dots,X_{d}$ the components of $X$.  The study we perform below is then on the set of observations $(Y_i,\delta_i,X_i)_{1 \leq i \leq n}$.
Having in mind this kind of model, we define a pseudo-estimator of the relative error for the regression function, for all $x \in \R^d$, by
\begin{equation}\label{pseudo}
\tilde{m}(x)=\frac{\displaystyle{\sum_{i=1}^{n}\frac{\delta_{i} Y_{i}^{-1}}{\overline{G}(Y_i)}K_d(x-X_i)}}{\displaystyle{\sum_{i=1}^{n}\frac{\delta_{i} Y_{i}^{-2}}{\overline{G}(Y_i)}K_d(x-X_i)}}=:\frac{\tilde{r}_{1}(x)}{\tilde{r}_{2}(x)},  
\end{equation}
with $\displaystyle \tilde{r}_{\ell}(\cdot)=\frac{\tilde{m}_{\ell}(\cdot)}{f(\cdot)}$ and $\tilde{m}_{\ell}(\cdot)=\int_{\R^d} y^{-\ell}f(y|x)dy$, where $K_d(\cdot)=K_d( \cdot/h_n)$ is a density function defined on $\R^d$ and $h_n$ is a sequence of positive numbers. \\
In this kind of model, it is well known that the empirical distribution is not a consistent estimator for the df $G$. Therefore, \hyperref[Kap-Mei]{Kaplan-Meier (1958)} proposed a consistent estimator for survival function $\overline{G}(\cdot)=1-G(\cdot)$ which is defined as 
\begin{equation}\label{K_M}
\overline{G}_n(t)=
\left\{
\begin{array}{cl}
\displaystyle{\prod_{i=1}^{n}{\left(1-\frac{1-\delta_{(i)}}{n-i+1}\right)}^{\mathds{1}_{\{Y_{(i)}\leq t\}}}} & \quad \text{if} \quad t<Y_{(n)} \\
0 & \quad \text{otherwise},
\end{array}
\right.
\end{equation}
where $Y_{(1)}\leq Y_{(2)}\leq \dots \leq Y_{(n)}$ are the order statistics of the $Y_{i}$'s and $\delta_{i}$ is the indicator of non-censoring. The properties of the K-M estimator for dependent variables can be found in \hyperref[Cai1]{Cai (1998, 2001)}. Then a calculable estimator of $m(\cdot)$ is given by 
\begin{equation*}\label{estimRRC}
\widehat{m}(x)=\frac{\widehat{m}_1(x)}{\widehat{m}_2(x)}
\end{equation*}
where for $\ell=1,\; 2$
\begin{equation*}\label{estimerell}
\displaystyle \widehat{m}_{\ell}(x)=\frac{\widehat{r}_{\ell}(x)}{\widehat{f}_X(x)}=\frac{\displaystyle{\sum_{i=1}^{n} \frac{\delta_i Y_{i}^{-\ell}}{\overline{G}_n (Y_i)}K_d(x-X_i)}}{\displaystyle{\sum_{i=1}^{n} K_d\left(x-X_i\right)}}
\end{equation*}
and $\widehat{f}_X(\cdot)$ is the kernel estimator of the marginal density function $f_X(\cdot)$.\\\\
In  order to define the  $\alpha$-mixing notion,  we  will use the following notations. Denote by $\displaystyle  \mathcal{F}_{1}^{k}(Z)$ the $\sigma-$algebra generated by $\{Z_j,\; 1\leq j\leq k\}$.
\begin{description}
	\item[Definition] \textit{Let }$\left\{ Z_{i},i=1,2,...\right\} $\textit{\
		denote a sequence of rv's. Given a positive integer n, set}%
	\begin{equation*}
	\alpha (n)={\sup }\left\{ \left\vert \mathbb{P(}A\cap B)-\mathbb{P(}A)%
	\mathbb{P}(B)\right\vert :A\in \mathcal{F}_{1}^{k}(Z)\text{ and }B\in 
	\mathcal{F}_{k+n}^{\infty }(Z),\; k\in {\mathrm{I\!N}}^*\right\}.
	\end{equation*}
\end{description}

\noindent \textit{\ The sequence is said to be }$\alpha$-\textit{mixing
	(strong mixing) if the mixing coefficient }$\alpha (n)\rightarrow 0$\textit{%
	\ as } $n\rightarrow \infty .$

\noindent Many processes  fulfill the strong mixing property. We quote
here, the usual ARMA processes which are geometrically strongly mixing, 
\textit{i.e.,} there exist $\rho \in \;(0,1)$ and $a>0$ such that, for any $%
n\geq 1$, $\alpha (n)\leq a\rho ^{n}$ (see, \textit{e.g.},  \hyperref[Jones]{Jones (1978)}.
The threshold models, the EXPAR models (see,\hyperref[Ozaki]{ Ozaki (1979)}, the simple ARCH
models (see \hyperref[Engle]{Engle (1982)}, their GARCH extension (see \hyperref[Bollerslev]{Bollerslev (1986)} and
the bilinear Markovian models are geometrically strongly mixing under some
general ergodicity conditions.\newline
\noindent We suppose that the sequences $\left\{ T_{i},i\geq 1\right\} $ and 
$\left\{ C_{i},i\geq 1\right\} $ are $\alpha $-mixing with coefficients $%
\alpha _{1}(n)$ and $\alpha _{2}(n)$, respectively. \hyperref[Cai2]{Cai (2001)} Lemma 2
showed that $\left\{ Y_{i},i\geq 1\right\} $ is then strongly mixing, with
coefficient%
\begin{equation*}
\alpha (n)=4\ \max (\alpha _{1}(n),\alpha _{2}(n)).
\end{equation*}%
\noindent From now on, we suppose that $\left\{ \left( Y_{i},\delta
_{i},X_{i}\right) \ \ i=1,...,n\right\} $ is strongly mixing with mixing's
coefficient $\alpha (n)$ such that $\alpha (n)=O(n^{-\nu })$ for some $\nu >3$.

\section{Hypotheses and main results}\label{Sec3}
Let $\mathcal{C}$  be a compact set in $\R^d$. We define the endpoint of $F$ by $\tau_F=sup\{x,\bar{F}(x)>0\}$ and we assume that $\tau_F<\infty$ and $\bar{G}(\tau_F)>0$.\\
All along the paper, we denote by $r_{\ell}(x)= \displaystyle{\int_{\R^d} t^{-\ell} f_{X,T}(x,t)dt}$
where $f_{T,X}(\cdot,\cdot)$ is the joint density of $(T,X)$. For any generic strictly positive constant $M$, we assume
\begin{equation} \label{majorant}
\forall T>0, \;\;  \exists M,  \;\;\text{such that}  \;\; M \geq T^{-\ell},  \;\;\text{for}  \;\; \ell=1,2.
\end{equation}
In order to present our result, we have to introduce the following  notations and hypotheses.
\begin{itemize}
	\label{H1}\item[\textbf{H1.}] The bandwidth $h_n$ satisfy $\displaystyle{\lim_{n \rightarrow +\infty} \frac{nh_n^d}{\log n}= +\infty},$ 
	\label{H2}\item[\textbf{H2.}] $\displaystyle{\lim_{n \rightarrow +\infty}\frac{\log n}{nh_n^{2d/v}}= 0},$ 
	\label{H3}\item[\textbf{H3.}] $\exists \psi>0$, $\exists c>0$, such that 
	\[\displaystyle{cn^{\frac{\gamma(3-\nu)}{\gamma(\nu+1)+2\gamma+1}+\psi d} \leq h_n^d}, \;\;\; \text{for all} \;\;\; \nu>3. \]
	\label{K1}\item [\textbf{K1.}]  The kernel $K_d$ is bounded and  \[\forall(z_1,z_2)\in \mathcal{C}^2,  |K_d(z_1)-K_d(z_2)|\leq \norm{z_1-z_2}^{\gamma} \;\;\;\text{for} \;\;\;\gamma>0,\]
	\label{K2}\item [\textbf{K2.}] $\displaystyle \int_{\R^d} \norm{t} K_d(t)dt < +\infty,$ with $\displaystyle \norm{t}=\sum_{i=1}^{n}|t_i|,$
	\label{K3}\item [\textbf{K3.}] $\displaystyle \int_{\R^d} \norm{t} K_d^2(t)dt < +\infty$ and $\displaystyle \int_{\R^d}  K_d^2(t)dt < +\infty.$
	\label{D1}\item [\textbf{D1.}] The function $ \displaystyle r_{\ell}(x)$ defined in (\ref{pseudo}) is continuously differentiable and $\displaystyle \sup_{x \in \mathcal{C}} \left| \frac{\partial r_{\ell}(x)}{\partial x_i}\right|<+\infty$ for $i=1,\dots,d,$
	\label{D2}\item [\textbf{D2.}] The function $\displaystyle \theta_{\ell}(x):=\int_{\R^d} \frac{t^{-\ell}}{\bar{G}(t)}f_{X,T}(x,t)dt$ is continuously differentiable and $\displaystyle \sup_{x \in \mathcal{C}} \left| \frac{\partial \theta_{\ell}(x)}{\partial x_i}\right|<+\infty$ for $i=1,\dots,d,$ and $\ell=2,3,4$.
	\label{D3}\item [\textbf{D3.}] The joint density $ \displaystyle f_{i,j}(\cdot,\cdot)$ of $(X_i,X_j)$ exists and satisfies for $ \displaystyle \ell=1,2$ \[\displaystyle{ \sup_{i,j} \sup_{u,v \in \mathcal{C}} | f_{\ell,i,j}(u,v)- f_{\ell,i}(u) f_{\ell,j}(v) | \leq C < \infty,}\] where $M$ is a positive constant.
\end{itemize}
\subsection{Some comments on the hypotheses} 
Hypotheses \hyperref[H1]{\textbf{H1}} and \hyperref[H2]{\textbf{H2}} are very common in both independent and dependent cases.  Furthermore, \hyperref[H3]{\textbf{H3}} permits to estimate the covariance term. The  hypotheses concerning the kernel \hyperref[K1]{\textbf{K}} are technicals and it is well-known that  it does not improve the quality of the estimation.  The  \hyperref[D1]{\textbf{D1}}  intervenes in Lemma 1, however  hypotheses  \hyperref[D2]{\textbf{D2}} and \hyperref[D3]{\textbf{D3}}  intervene in \hyperref[lem4]{Lemma 4} to deal with the covariance term.
\subsection{Bandwidth selection} \label{discussion-bandwidht}
\noindent Note that: It is well-known that the choice of the kernel does not affect the quality of the estimation. In contrast, the bandwidth parameter $h_n$  has a great  influence on the quality of  the estimator. A parameter that is too small causes the appearance of artificial details in the graph of the estimator, and  for a large enough value of the bandwidth $h_n$, the majority of the features is on the contrary erased. The choice of the bandwidth $h_n$ is therefore a central question  in nonparametric estimation. Recall that in the literature, there are mainly three methods, the "rule of thumb", "plug-in" and "cross-validation".  Each method has its merits and drawbacks.  We point out that the latter is very popular and its main idea is to minimize the following criterion \[\displaystyle CV_{h_n}=\frac{1}{n-1}\sum_{i=1}^n\big(Y_i-\widehat{m}_{-i,\, h_n}(X_i)\big)^2\] where  $\widehat{m}_{-i,\, h_n}(X_i)$ is the  estimator of $m(\cdot)$ obtained by raising the observation $(X_i, Y_i)$ in the sense of practical point of view. Even if the latter has the drawbacks that  it is  very variable and can give an underestimation of $h_{opt}$, it remains the most common used method. In our entire simulation study, we adopt the cross-validation method (see: \hyperref[section4]{Sect. 4}).
The following theorem establishes the almost sure uniform convergence of $\widehat{m}$ towards $m$.
\begin{theorem}\label{theorem}
	Under hypotheses \hyperref[H1]{\textbf{H1-3}}, \hyperref[D3]{\textbf{D1-3}}, we have, for $\ell=1,2$,
	\begin{equation*}
	\sup_{x \in \mathcal{C}}| \widehat{m}(x)- m(x)| =O_{a.s.}\left(\sqrt{\frac{\log n}{nh_n^d}}+\sqrt{\frac{\log n}{n h_n^{2d/v}}}\right)+O(h_n)  \quad \text{as} \quad n \longrightarrow \infty.
	\end{equation*}
\end{theorem}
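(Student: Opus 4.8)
The plan is to work with the ratio representation $\widehat m(x)=\widehat r_1(x)/\widehat r_2(x)$, in which the kernel density estimator $\widehat f_X$ cancels; from now on $\widehat r_\ell$ is understood to carry the normalising factor $(nh_n^d)^{-1}$, and $m(x)=r_1(x)/r_2(x)$. Writing
$$\widehat m(x)-m(x)=\frac{1}{\widehat r_2(x)}\Big[\big(\widehat r_1(x)-r_1(x)\big)-m(x)\big(\widehat r_2(x)-r_2(x)\big)\Big],$$
the problem reduces to: (i) a uniform lower bound $\inf_{x\in\mathcal C}\widehat r_2(x)\ge c_0>0$ a.s.\ for $n$ large, which follows from the continuity and strict positivity of $r_2$ on the compact $\mathcal C$ (using \eqref{majorant} and $\bar G(\tau_F)>0$) together with the uniform consistency of $\widehat r_2$ proved below; and (ii) the bound $\sup_{x\in\mathcal C}|\widehat r_\ell(x)-r_\ell(x)|=O_{a.s.}\big(\sqrt{\log n/(nh_n^d)}+\sqrt{\log n/(nh_n^{2d/\nu})}\big)+O(h_n)$ for $\ell=1,2$, which then yields the theorem since $m$ is bounded on $\mathcal C$.

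To prove (ii) I would first replace the Kaplan--Meier weight $\bar G_n$ by the true $\bar G$. Since $\sup_{t\le\tau_F}|\bar G_n(t)-\bar G(t)|\to0$ a.s.\ at a rate available for $\alpha$-mixing data (Cai (1998)) that is faster than the bandwidth terms, and since $\delta_iY_i^{-\ell}$ is bounded by \eqref{majorant} while $\bar G,\bar G_n$ stay bounded below near $\tau_F$, one gets $\sup_{x\in\mathcal C}|\widehat r_\ell(x)-\tilde r_\ell(x)|=o_{a.s.}\big(\sqrt{\log n/(nh_n^d)}\big)$, where $\tilde r_\ell$ is the pseudo-estimator built with $\bar G$. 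It then remains to bound $\sup_{x\in\mathcal C}|\tilde r_\ell(x)-r_\ell(x)|$, which splits into the bias $\sup_x|\E\tilde r_\ell(x)-r_\ell(x)|$ and the fluctuation $\sup_x|\tilde r_\ell(x)-\E\tilde r_\ell(x)|$. For the bias, the key identity $\E[\delta Y^{-\ell}\bar G(Y)^{-1}\mid X]=\E[T^{-\ell}\mid X]=r_\ell(X)/f_X(X)$ (valid under conditional independence of $C$ and $(T,X)$) reduces $\E\tilde r_\ell(x)$ to a convolution of $r_\ell$ with the scaled kernel; a first-order Taylor expansion using the $\mathcal C^1$ regularity and bounded derivatives in \textbf{D1}--\textbf{D2}, together with the moment condition \textbf{K2}, produces the $O(h_n)$ term (this is essentially Lemma 1).

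The fluctuation term is the main obstacle, and I would treat it by a discretization/exponential-inequality argument. Cover $\mathcal C$ by $L_n\asymp n^{a}$ cubes of side $\ell_n$ centred at $x_1,\dots,x_{L_n}$; using the Lipschitz-type control \textbf{K1} on the kernel one bounds, for $x$ in the cube around $x_k$, $|\tilde r_\ell(x)-\tilde r_\ell(x_k)|\le C(\ell_n/h_n)^{\gamma}h_n^{-d}$, and \textbf{H3} is precisely what makes this discretization error negligible compared with $\sqrt{\log n/(nh_n^d)}$ for a suitable $a$. At each centre $x_k$ one applies an exponential (Fuk--Nagaev / block-Bernstein) inequality for the partial sums of the $\alpha$-mixing bounded array $Z_{i}=(nh_n^d)^{-1}\delta_iY_i^{-\ell}\bar G(Y_i)^{-1}K((x_k-X_i)/h_n)$; the summands are of order $(nh_n^d)^{-1}$, and the crucial quantity is $s_n^2:=\mathrm{Var}\!\big(\sum_i Z_i\big)$. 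Its diagonal part is of order $(nh_n^d)^{-1}$ and produces the first term $\sqrt{\log n/(nh_n^d)}$, while the off-diagonal part $\sum_{i\ne j}\mathrm{Cov}(Z_i,Z_j)$ is controlled — splitting indices into near and far pairs — by the boundedness of the joint densities in \textbf{D3}, the smoothness in \textbf{D2}, and the polynomial mixing rate $\alpha(n)=O(n^{-\nu})$, $\nu>3$; this is the content of Lemma 4, and it is where the second term $\sqrt{\log n/(nh_n^{2d/\nu})}$ originates, hypothesis \textbf{H2} guaranteeing it is indeed of that order. Summing the exponential bounds over the polynomially many centres (so $\log L_n\asymp\log n$) and applying Borel--Cantelli yields the a.s.\ rate for the fluctuation; combining it with the $O(h_n)$ bias and the negligible Kaplan--Meier and discretization errors gives (ii), and hence the theorem via the ratio decomposition above.
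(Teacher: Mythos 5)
Your proposal is correct and follows essentially the same route as the paper: the same ratio decomposition reducing the problem to uniform bounds on $\widehat r_\ell-r_\ell$, the same three-way split into the Kaplan--Meier substitution error, the bias $O(h_n)$ via Taylor expansion under \textbf{K2}/\textbf{D1}, and the fluctuation term handled by a finite covering of $\mathcal C$, the Fuk--Nagaev inequality, a Masry-type near/far splitting of the covariance sum under \textbf{D3} and the mixing rate, and Borel--Cantelli. If anything, your explicit treatment of the uniform lower bound on $\inf_{x\in\mathcal C}\widehat r_2(x)$ is more careful than the paper, which uses $1/\inf_{x\in\mathcal C}|\widehat r_2(x)|$ without comment.
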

	\begin{remark}
	We point out that in our result we highlight the covariance term which give us how the dependency intervenes. This point is rarely given in the dependent case of many papers. In the latter the authors made an additional hypotheses to vanish the covariance term to get an analogous result as in the independent case.
\end{remark}
\section{Simulation study}\label{Sect4}
The aim of this part is to examine the performance of our estimator $\widehat{m}(x)$ by considering some fixed size particular cases. We do it by varying the dependency rate and the censoring percentage (C.P.). We compare the efficiency of the implemented method to the classical regression (CR) estimator  defined in \hyperref[Guessoum]{Guessoum and Ould Sa\"id (2010)}.\\
In the next paragraph, we recall a result of {\hyperref[port]{Port (1994)}} which permits to calculate the theoretical regression function that will be used throughout this section (see formula (\ref{theo_func}) below).
\begin{Prop}\label{prop_port}
	Let $q_1(X)$ and $q_2(X)$ be two random variables with means: $\mu_1$ and $\mu_2$ and variances: $v_1$ and $v_2$ respectively, and covariance $v_{12}$. Let $(X_i)_{1 \leq i \leq n}$ be an i.i.d. sequence of r.v. and defined by 
	\[\widehat{\Sigma}_1= \frac{1}{n} \sum_{1}^{n} q_1(X_i) \;\; \text{and} \;\; \widehat{\Sigma}_2= \frac{1}{n} \sum_{1}^{n} q_2(X_i)\]
	and $\widehat{R}=\displaystyle{\frac{\widehat{\Sigma}_1}{\widehat{\Sigma}_2}}$ then the second order approximation of $\E[\widehat{R}]$ is 
	\[\E[\widehat{R}] \approx \frac{\mu_1}{\mu_2}+ \frac{1}{n} \left( \frac{\mu_1 v_2}{\mu_2^3}-\frac{v_{12}}{\mu_2^2}\right).\]
\end{Prop}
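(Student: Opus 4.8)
The plan is to use a standard second--order Taylor expansion for a ratio of sample means (a ``delta--method'' type bias expansion). First I would write $D_j:=\widehat{\Sigma}_j-\mu_j$ for $j=1,2$, so that $\widehat{R}=(\mu_1+D_1)/(\mu_2+D_2)$. Since the $X_i$ are i.i.d., the sample means are unbiased, $\E[D_1D_2]=v_{12}/n$ and $\E[D_2^2]=v_2/n$ (also $\E[D_1^2]=v_1/n$), while every mixed central moment of $(\widehat{\Sigma}_1,\widehat{\Sigma}_2)$ of total order $\ge 3$ is $O(n^{-2})$; these are the only moment facts that will be needed.

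Next I would expand: factoring out $\mu_2$ and using the geometric series $(1+u)^{-1}=1-u+u^2-\cdots$ with $u=D_2/\mu_2$ gives, for $\mu_2\neq 0$,
\[
\widehat{R}=\frac{\mu_1+D_1}{\mu_2}\Big(1-\frac{D_2}{\mu_2}+\frac{D_2^2}{\mu_2^2}-\cdots\Big)
=\frac{\mu_1}{\mu_2}+\frac{D_1}{\mu_2}-\frac{\mu_1D_2}{\mu_2^2}-\frac{D_1D_2}{\mu_2^2}+\frac{\mu_1D_2^2}{\mu_2^3}+\mathcal{R}_n,
\]
where $\mathcal{R}_n$ gathers all terms of total degree $\ge 3$ in $(D_1,D_2)$. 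Taking expectations, the two linear terms vanish, the term $-D_1D_2/\mu_2^2$ contributes $-v_{12}/(n\mu_2^2)$, the term $\mu_1D_2^2/\mu_2^3$ contributes $\mu_1v_2/(n\mu_2^3)$, and $\E[\mathcal{R}_n]=O(n^{-2})$ by the moment bounds above; collecting the $1/n$ contributions gives
\[
\E[\widehat{R}]=\frac{\mu_1}{\mu_2}+\frac1n\Big(\frac{\mu_1v_2}{\mu_2^3}-\frac{v_{12}}{\mu_2^2}\Big)+O(n^{-2}),
\]
which is exactly the asserted second--order approximation once the remainder is absorbed into the ``$\approx$''.

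The step I expect to be the real obstacle is making that ``$\approx$'' rigorous: one must control $\E[\mathcal{R}_n]$ and, more delicately, guarantee that the geometric series may be invoked at all, i.e.\ that $\widehat{\Sigma}_2$ stays away from $0$ --- without some such control $\E[\widehat{R}]$ need not even be finite. The usual device is to work on the event $\{|D_2|\le\mu_2/2\}$, on which the series converges, and to show via Chebyshev's inequality and $\E[D_2^2]=v_2/n$ that its complement has probability $O(n^{-1})$ and is negligible, together with requiring enough finite moments of $q_1(X),q_2(X)$ (fourth moments suffice) so that $\E[\mathcal{R}_n]=O(n^{-2})$. This verification is genuinely heuristic and, following \hyperref[port]{Port (1994)}, is taken for granted in the statement.
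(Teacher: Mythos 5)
The paper offers no proof of this proposition at all: it is simply recalled from Port (1994) and used as a black box to compute the theoretical curve $m(x)$ in the simulations. Your derivation is the standard one behind Port's formula -- centre the sample means, expand $(\mu_2+D_2)^{-1}$ as a geometric series, kill the linear terms by unbiasedness, and read off the $1/n$ contributions from $\E[D_1D_2]=v_{12}/n$ and $\E[D_2^2]=v_2/n$ -- and the algebra checks out exactly against the stated expression. You are also right to flag that the only genuinely delicate point is the meaning of ``$\approx$'': without confining the analysis to an event such as $\{|D_2|\le \mu_2/2\}$ and controlling $\E\bigl[|\widehat{R}|\mathds{1}_{\{|D_2|>\mu_2/2\}}\bigr]$ (not merely the probability of that event, since $\widehat{R}$ can blow up there), the expectation $\E[\widehat{R}]$ need not even exist; this is precisely why the result is stated only as a second-order approximation, and your treatment is consistent with how the paper (and Port) use it.
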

\begin{algorithm}
	\caption{}
	\begin{algorithmic} 
		\REQUIRE $0< \rho <1$, $X_0 \leadsto \mathcal{N}(1,0.1)$ , $\varepsilon \leadsto \mathcal{N}(0,1)$, $C \leadsto \mathcal{N}(3+a,1)$ with $a$ being a parameter that adapts the censorship percentage C.P. 
		\begin{description}
			\item[Step 1 :]	We consider the strong mixing two-dimensional process generated by
			\begin{equation*}
			\left\{
			\begin{aligned}
			X_i&=c+\rho X_{i-1}+\sqrt{1-\rho^2}\varepsilon_i,\\
			T_i&=X_{i+1}, \;\; i=1,\dots,n
			\end{aligned}
			\right.
			\end{equation*}
			\item[Step 2 :] Given $X_1=x$, we have $T_1=c+\rho x+\sqrt{1-\rho^2}\varepsilon_1$. Using Port property (see \hyperref[prop_port]{proposition 1}) the theoretical function becomes
			\begin{equation} 
			\label{theo_func} m(x)=\frac{\E \big[T^{-1}|X=x\big]}{\E \big[T^{-2}|X=x\big]}=c+\rho x+\frac{1-\rho^2}{c+\rho x}.
			\end{equation}
			\item[Step 3 :] Determine $Y_i=T_i\wedge C_i$ and $\delta_i={\mathds{1}}_{\{T_i \leq C_i\}}$ which gives the observed sample  $\{(X_i,Y_i,\delta_i), 1 \leq i \leq n \}$.
			\item[Step 4 :] The K-M estimator of $\bar{G}(\cdot)$ is calculated from (\ref{K_M}).
			\item[Step 5 :] The Gaussian kernel $\displaystyle K(z)=\frac{1}{\sqrt{2\pi}}\exp\left(\frac{-z}{2}\right)$ is used as kernel function for the estimator and we choose the optimal bandwidth $h_{opt}$ by the cross validation method (see \hyperref[discussion-bandwidht]{Subsection 3.2}) from $[0.01,2]$ by step of $0.01$ and satisfying \hyperref[H3]{\textbf{H3}}.
		\end{description}
	\end{algorithmic}
	{\bf Output:} Calculate the RER estimator given by (\ref{estimerell}) for $x\in[1,4]$ and $h_{opt}$.
\end{algorithm}
\subsection{Linear case }
In this subsection, we observe the finite sample performance of our estimator (RER) for weak and strong dependency when the theoretical function is of linear form.
\subsubsection{Weak dependency}

\paragraph{$\bullet$ Effect of sample size:} It is easy to see from \hyperref[figure1]{Figure 1} that the quality of fit is better when $n$ increases for a fixed C.P. and $\rho$. 
\begin{figure}[!h]
	\begin{minipage}[c]{.26\linewidth}
		\includegraphics[width=1.3\textwidth]{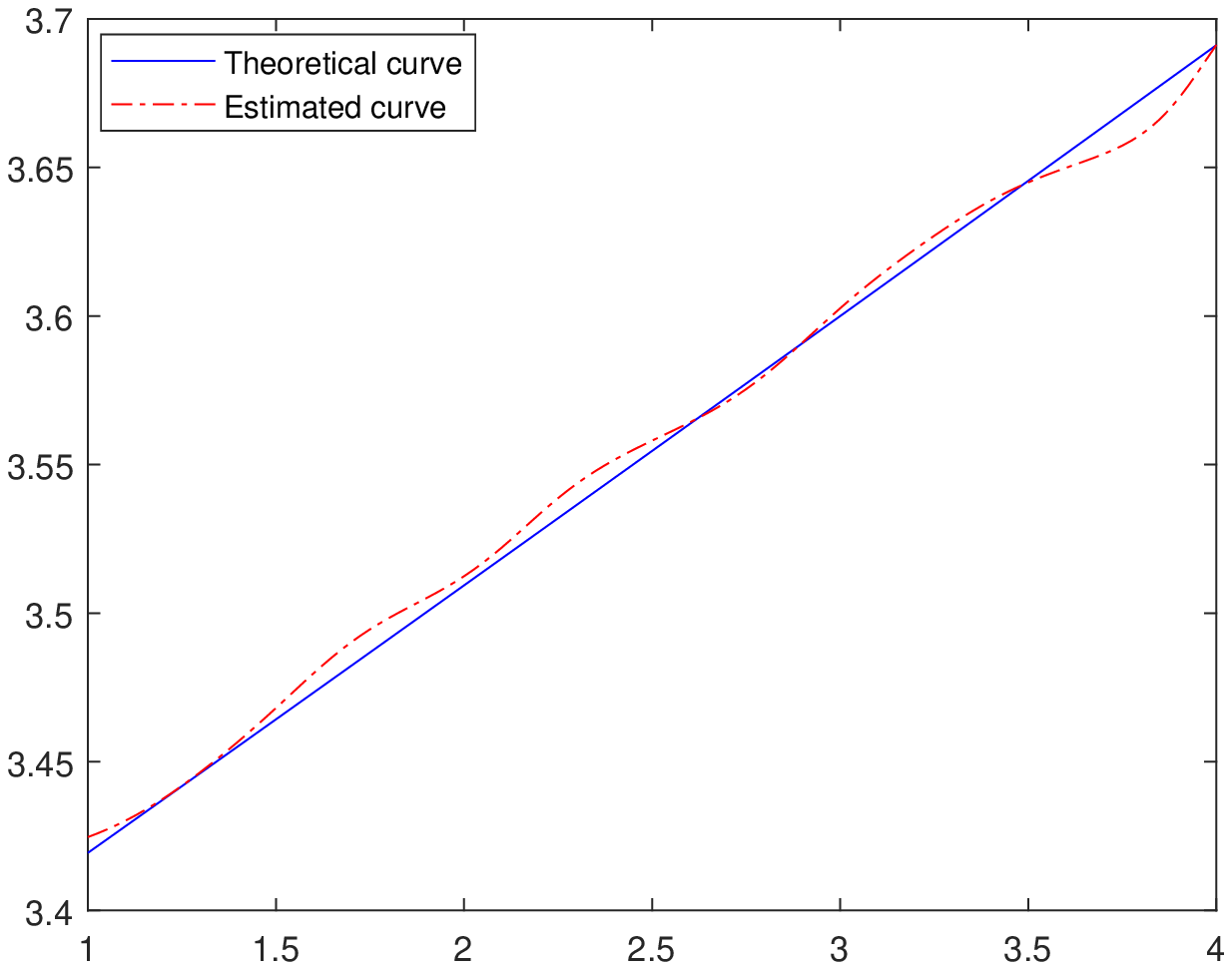}   
	\end{minipage} \hfill
	\begin{minipage}[c]{.26\linewidth}
		\includegraphics[width=1.3\textwidth]{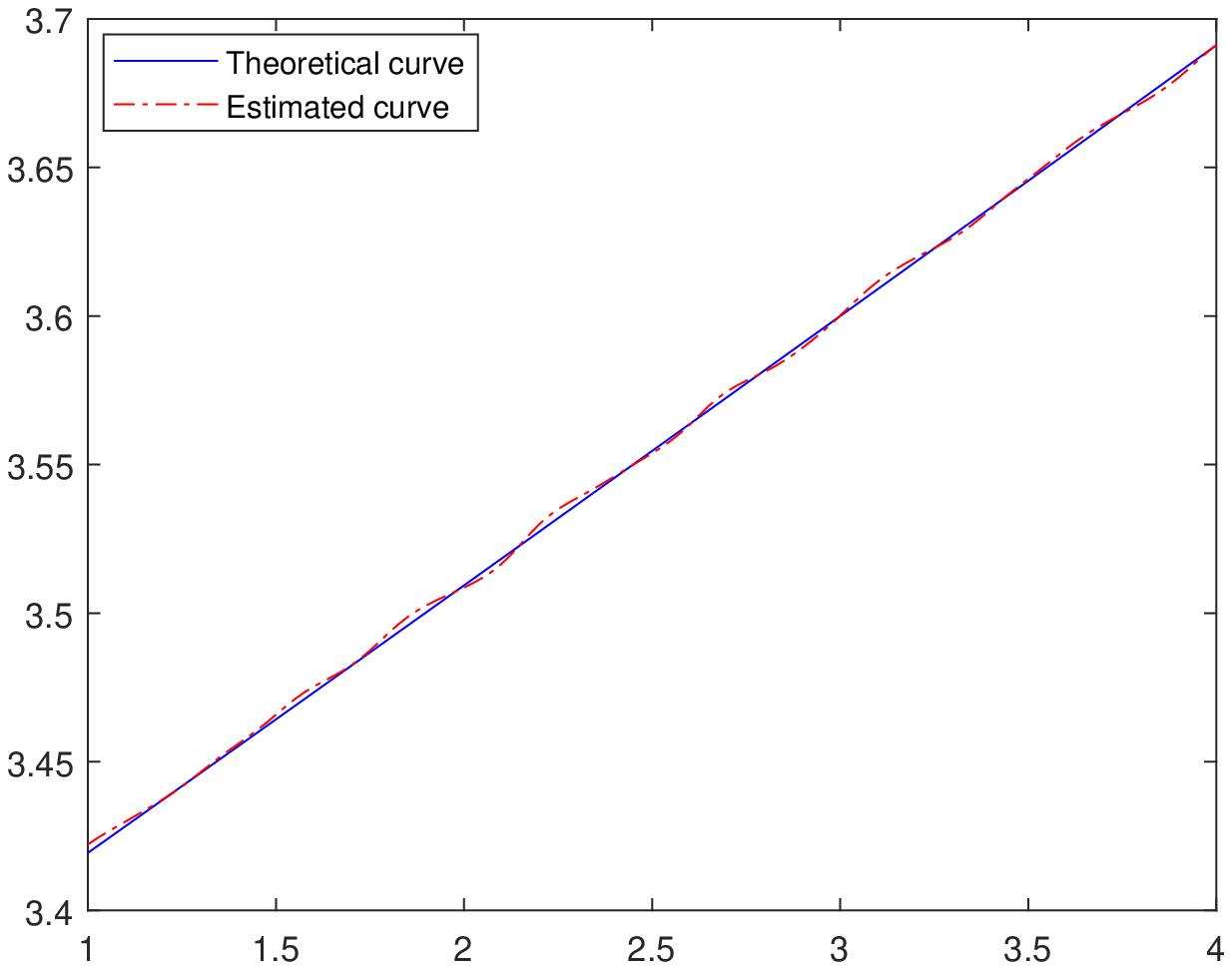}
	\end{minipage} \hfill
	\begin{minipage}[c]{.26\linewidth}
		\includegraphics[width=1.3\textwidth]{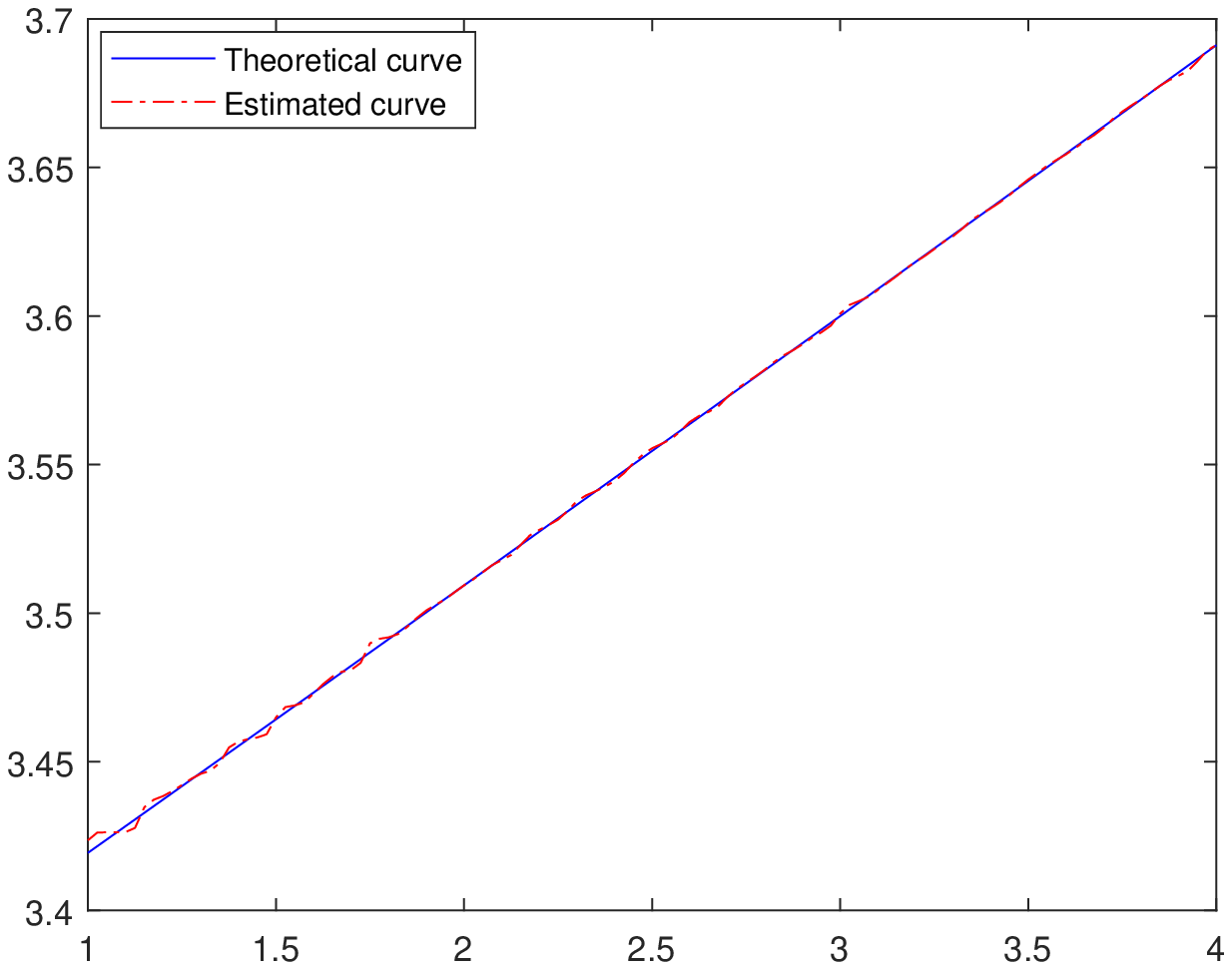}
	\end{minipage}\hfill\hfill
	\caption{ \textcolor{blue}{$m(x)$}, \textcolor{red}{$\widehat{m}(x)$} with $c=3$, $\rho=0.1$ and C.P. $\approx 40\%\;$ for $n=100, 300 \;\text{and}\; 500$ respectively.}\label{figure1}
\end{figure}	
	
\paragraph{$\bullet$ Effect of C.P.:} To visualize the global performance of the RER estimator under censorship, we set $\rho=0.1$ and vary the C.P. In this case, there is 
 more variation in the resulting estimator, but generally remains close to the theoretical curve even for a high C.P. (\hyperref[figure2]{Figure 2} displays the results). In conclusion, our estimator is still resistant to the effect of censorship when dependency is weak. 
\begin{figure}[!h]
	\begin{minipage}[c]{.26\linewidth}
		\includegraphics[width=1.3\textwidth]{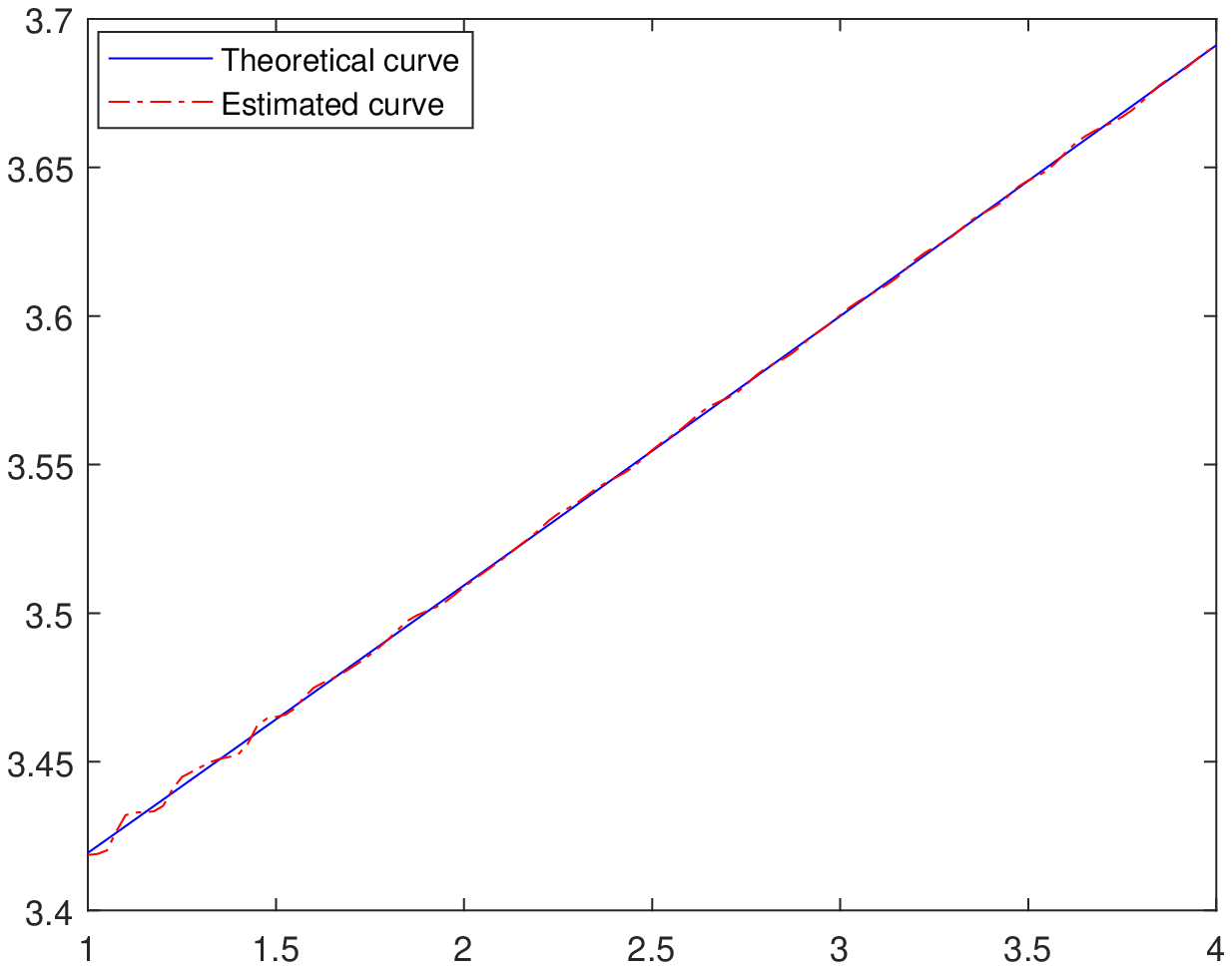}  
	\end{minipage} \hfill
	\begin{minipage}[c]{.26\linewidth}
		\includegraphics[width=1.3\textwidth]{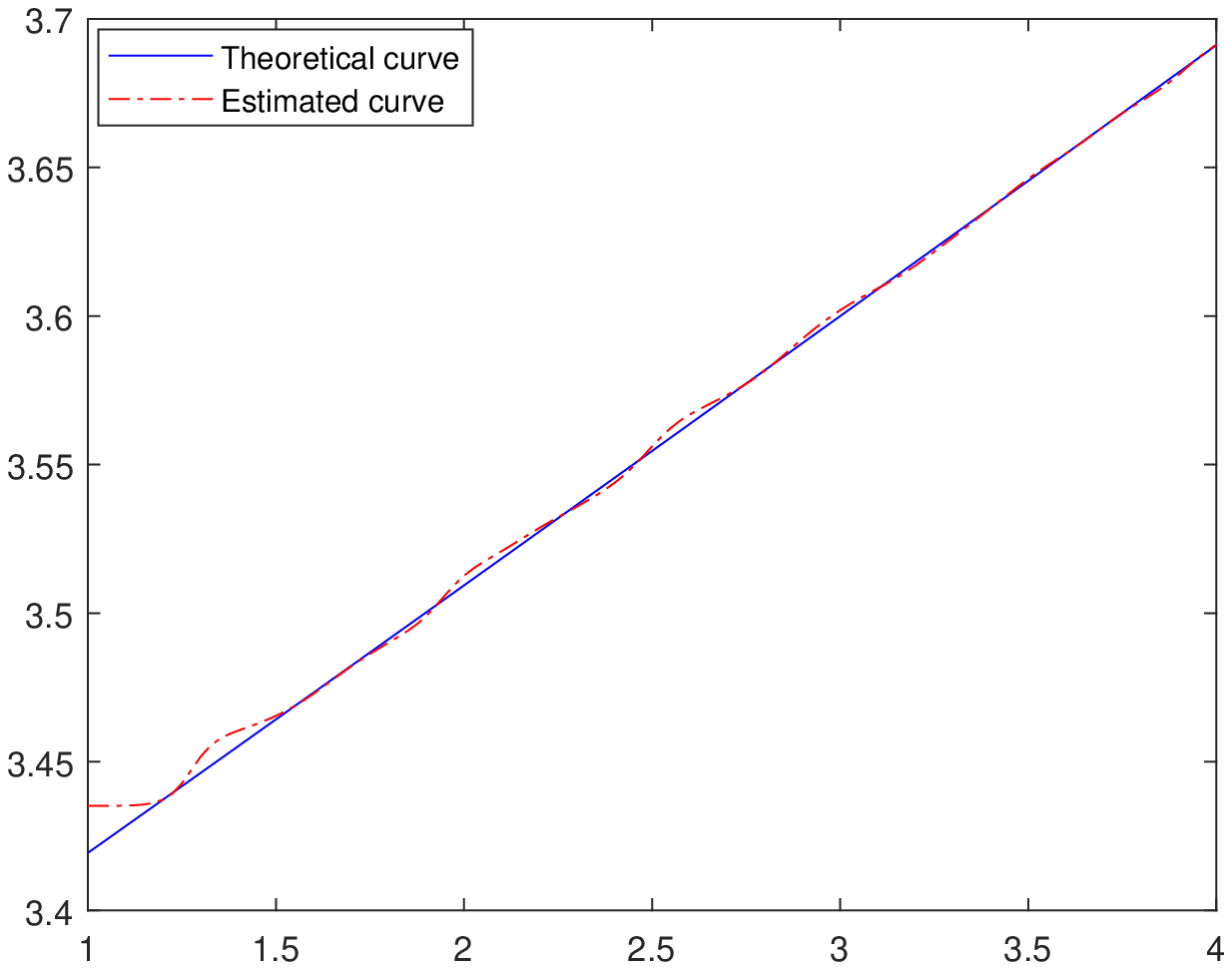}
	\end{minipage} \hfill
	\begin{minipage}[c]{.26\linewidth}
		\includegraphics[width=1.3\textwidth]{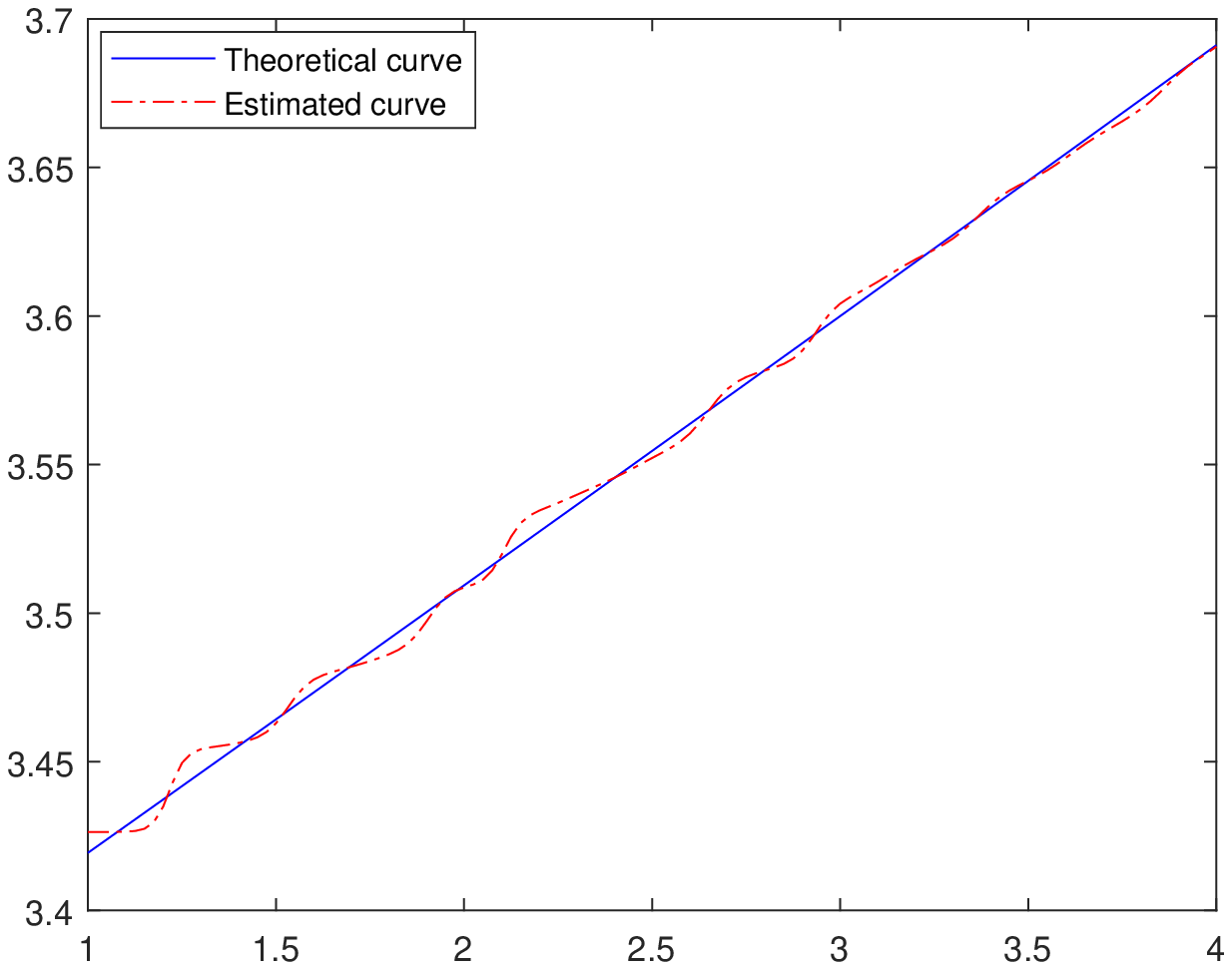}
	\end{minipage}\hfill\hfill
	\caption{ \textcolor{blue}{$m(x)$}, \textcolor{red}{$\widehat{m}(x)$} with $c=3$, $\rho=0.1$ and $n=300$ for C.P. $\approx 10, 33\;\text{and}\;72 \%\;$ respectively.}\label{figure2}
\end{figure}
\subsubsection{Strong dependency}
\paragraph{ $\bullet$ Effect of sample size:} For the case of highly dependent data $(\rho=0.7)$ and for a fixed C.P., we can observe from \hyperref[figure3]{Figure 3} that the RER estimator is adjusted to the theoretical curve when $n$ rises.
\begin{figure}[!h]
	\begin{minipage}[c]{.26\linewidth}
		\includegraphics[width=1.3\textwidth]{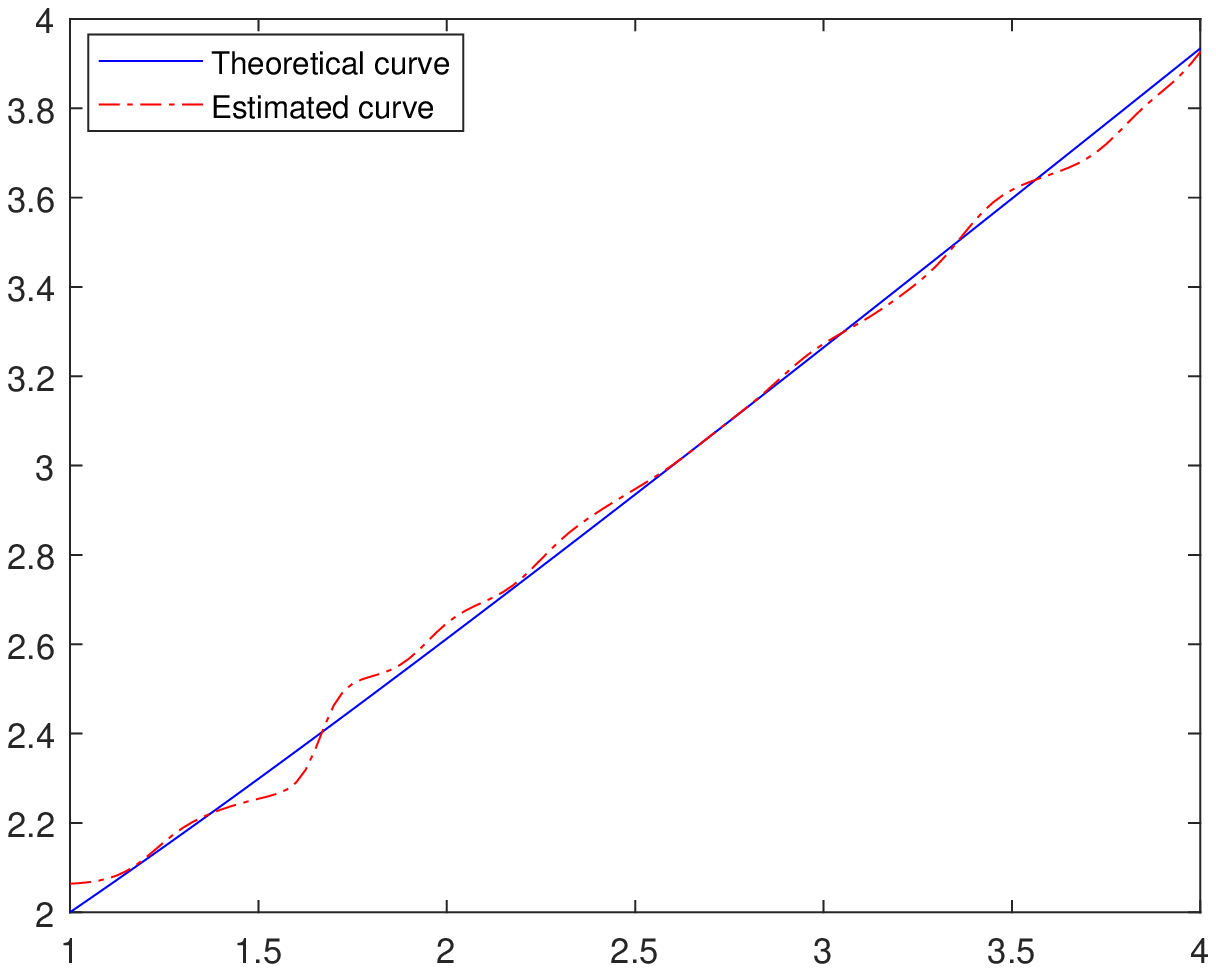}   
	\end{minipage} \hfill
	\begin{minipage}[c]{.26\linewidth}
		\includegraphics[width=1.3\textwidth]{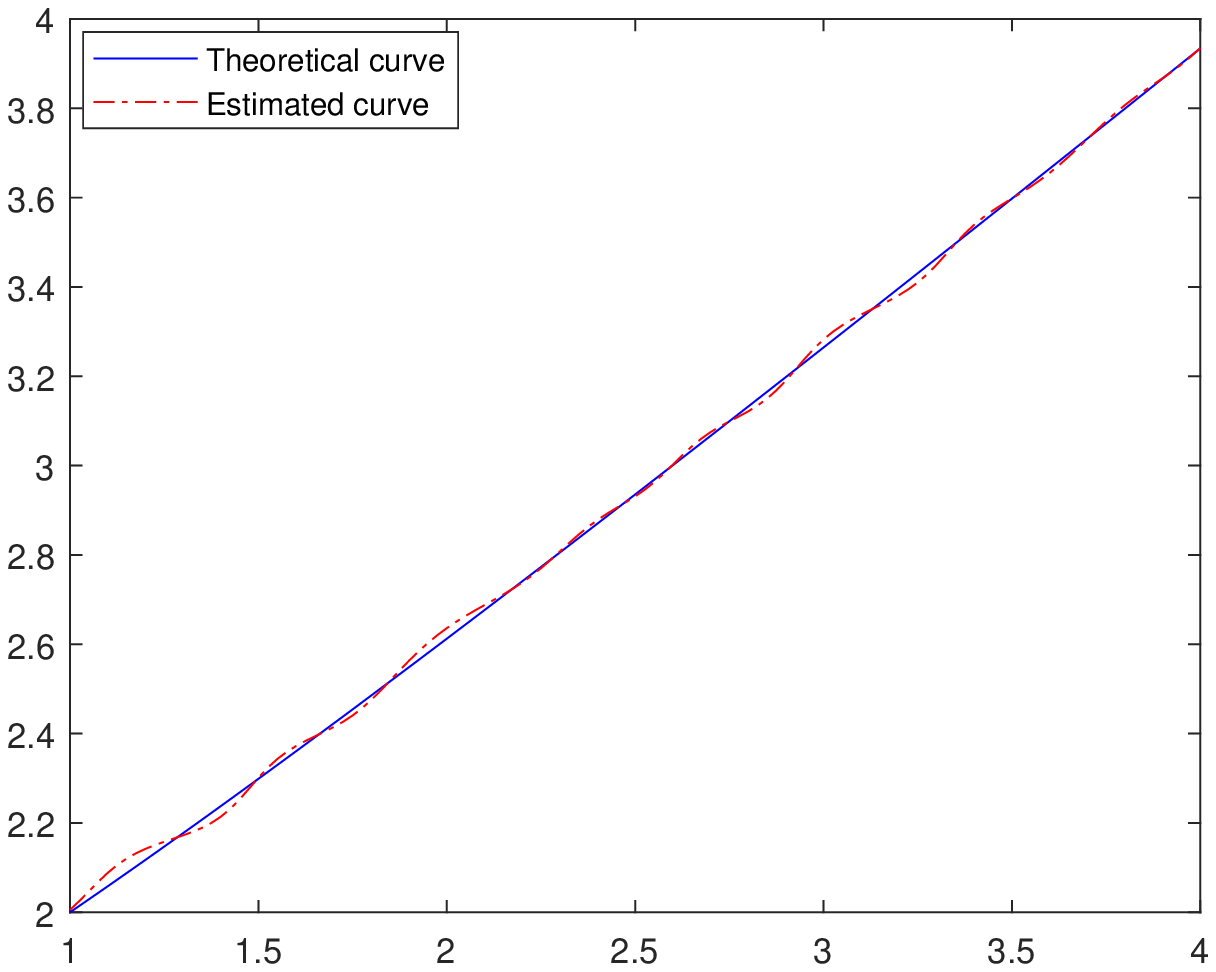}
	\end{minipage} \hfill
	\begin{minipage}[c]{.26\linewidth}
		\includegraphics[width=1.3\textwidth]{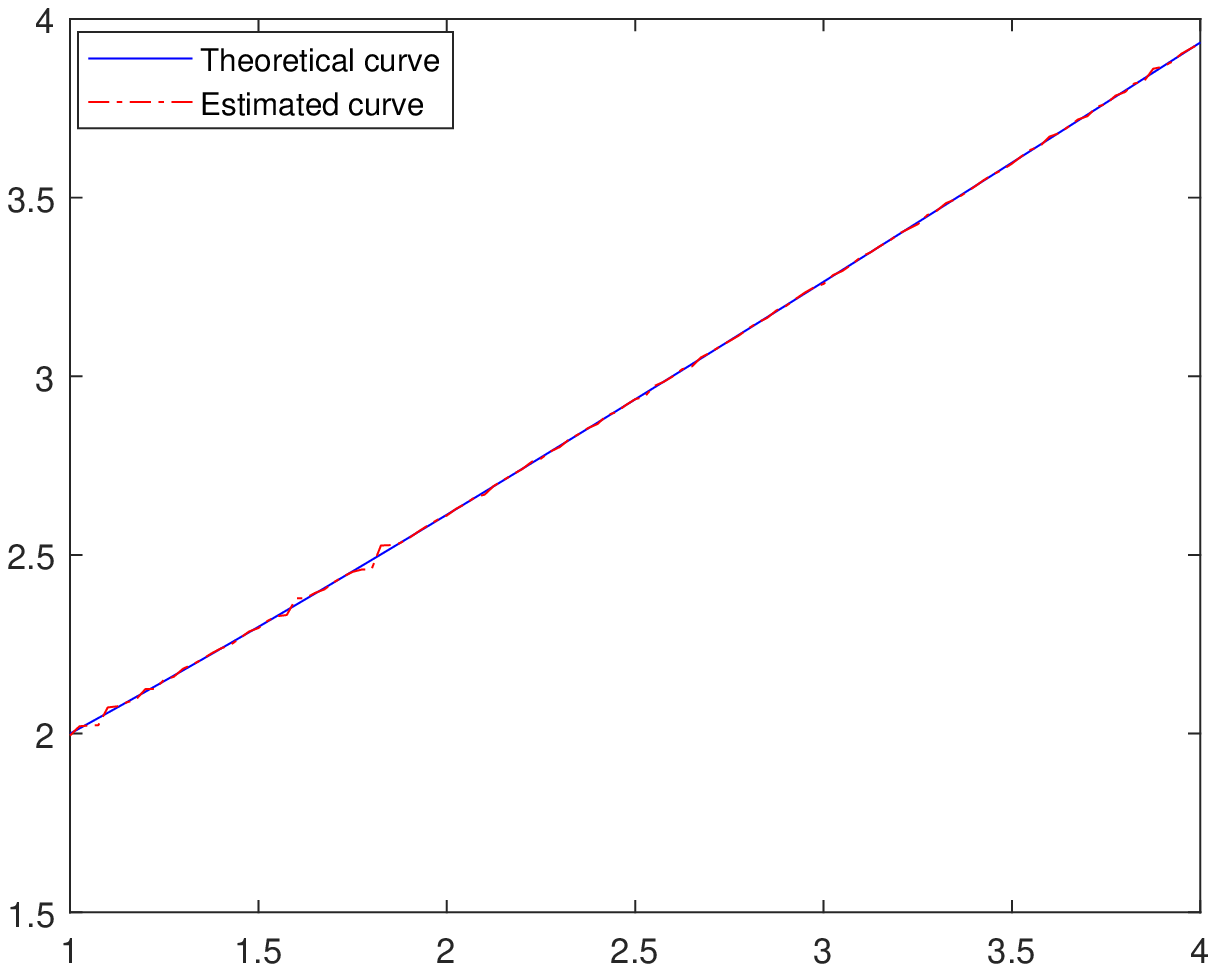}
	\end{minipage}\hfill\hfill
	\caption{\textcolor{blue}{$m(x)$}, \textcolor{red}{$\widehat{m}(x)$} with $c=1$, $\rho=0.7$ and C.P.$\approx 30\%$ for $n=100,300 \; \text{and} \;500$  respectively.}\label{figure3}
\end{figure}
\paragraph{$\bullet$ Effect of C.P.:} We see clearly that the quality of fit is better for large sample size and low percentage of censoring (see \hyperref[figure4]{Figure 4}). 
\begin{figure}[!h]
	\begin{minipage}[c]{.26\linewidth}
		\includegraphics[width=1.3\textwidth]{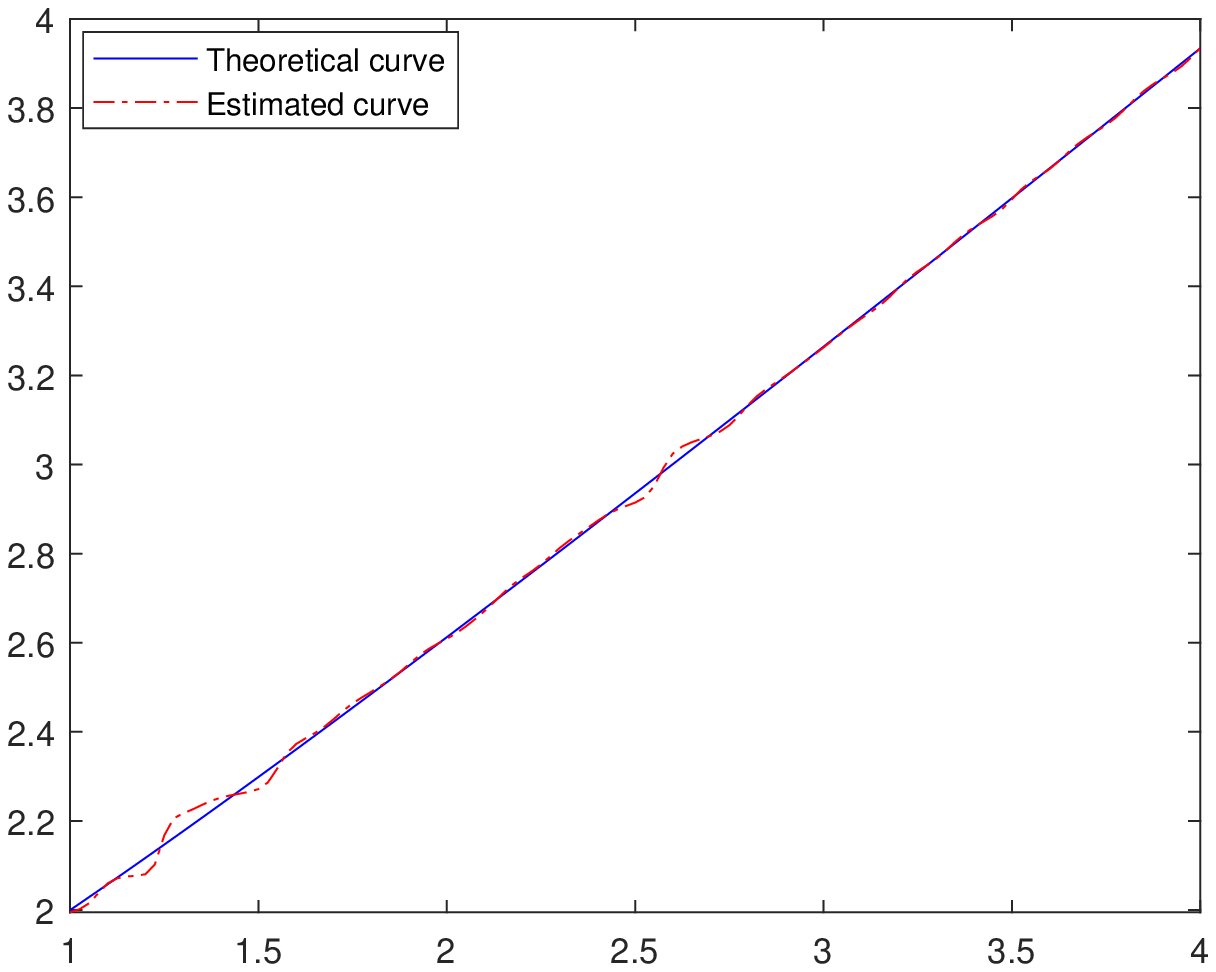}   
	\end{minipage} \hfill
	\begin{minipage}[c]{.26\linewidth}
		\includegraphics[width=1.3\textwidth]{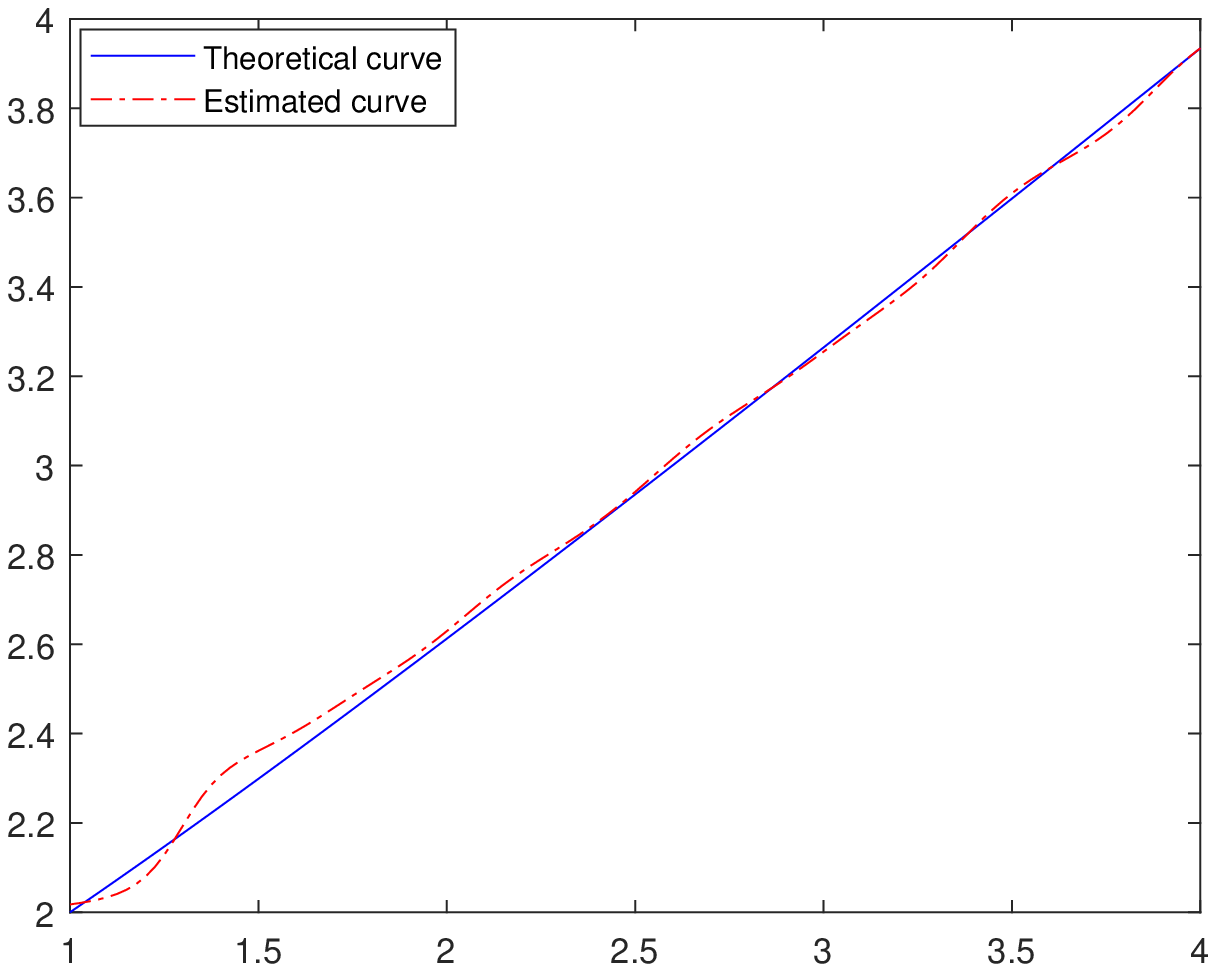}
	\end{minipage} \hfill
	\begin{minipage}[c]{.26\linewidth}
		\includegraphics[width=1.3\textwidth]{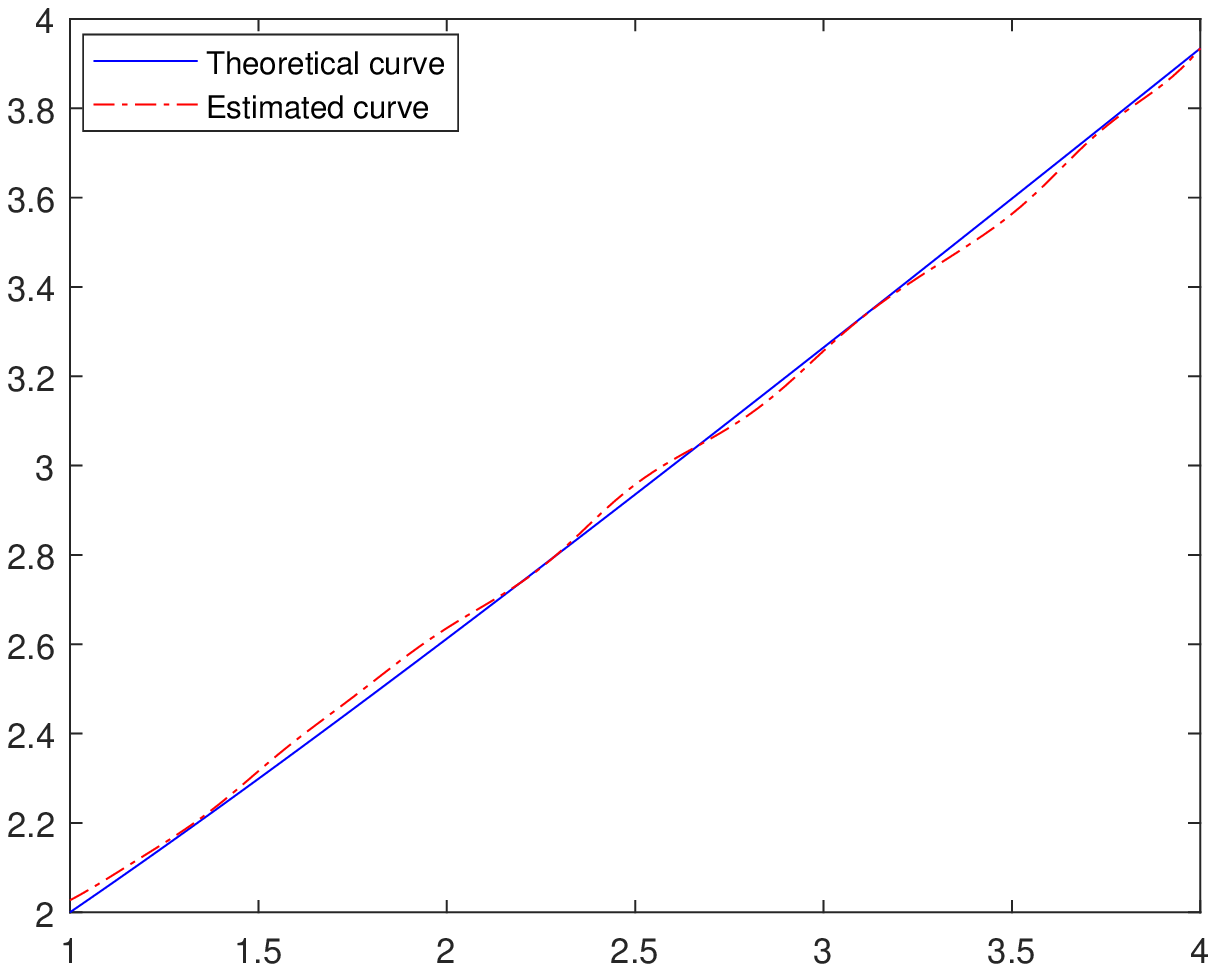}
	\end{minipage}\hfill\hfill
	\caption{\textcolor{blue}{$m(x)$}, \textcolor{red}{$\widehat{m}(x)$} with $c=1$, $\rho=0.7$ and $n=300$ for C.P.$\approx 11, 33\; \text{and} \;65\%$ respectively.}\label{figure4}
\end{figure}
\subsection{Nonlinear case}
We consider now, three nonlinear functions: 
\begin{align*}
T_i&=1+cos\left(\frac{\pi}{2}X_i\right), \;\;\;\text{Cosinus model,}\\
T_i&=\exp{(\rho^2 X_i)}, \;\;\;\;\;\;\;\;\text{Exponential model},\\
T_i&=\frac{1}{X_i}, \;\;\;\;\;\;\;\;\;\;\;\;\;\;\;\;\;\;\;\;\text{Inverse model.}
\end{align*}
\hyperref[figure5]{Figure 5} shows that the quality of the fit is good as in linear model. Clearly, we see that the adjustment improves when $n$ increases.
\begin{figure}[!h]
	\begin{minipage}[c]{.26\linewidth}
		\includegraphics[width=1.3\textwidth]{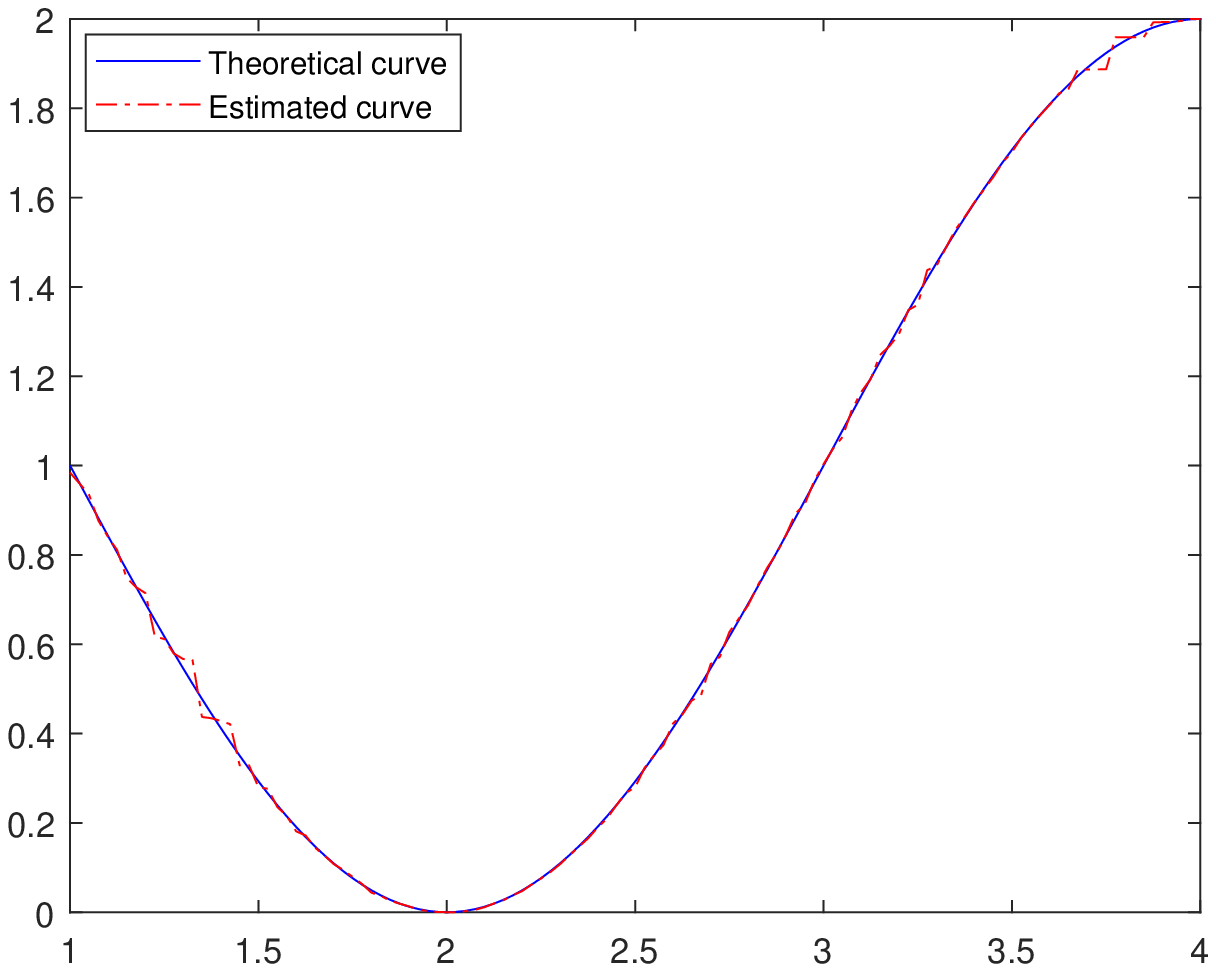}   
	\end{minipage} \hfill
	\begin{minipage}[c]{.26\linewidth}
		\includegraphics[width=1.3\textwidth]{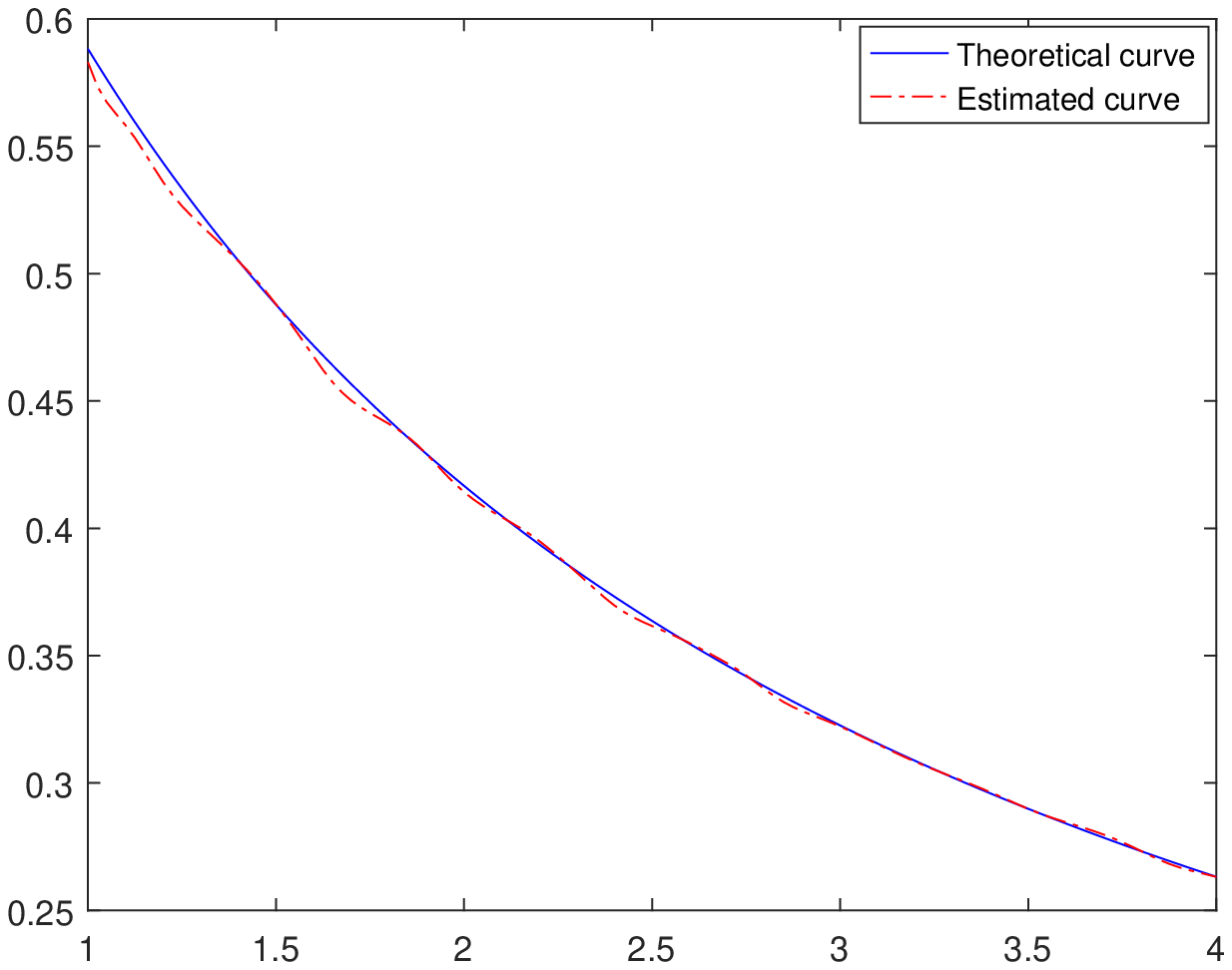}
	\end{minipage} \hfill
	\begin{minipage}[c]{.26\linewidth}
		\includegraphics[width=1.3\textwidth]{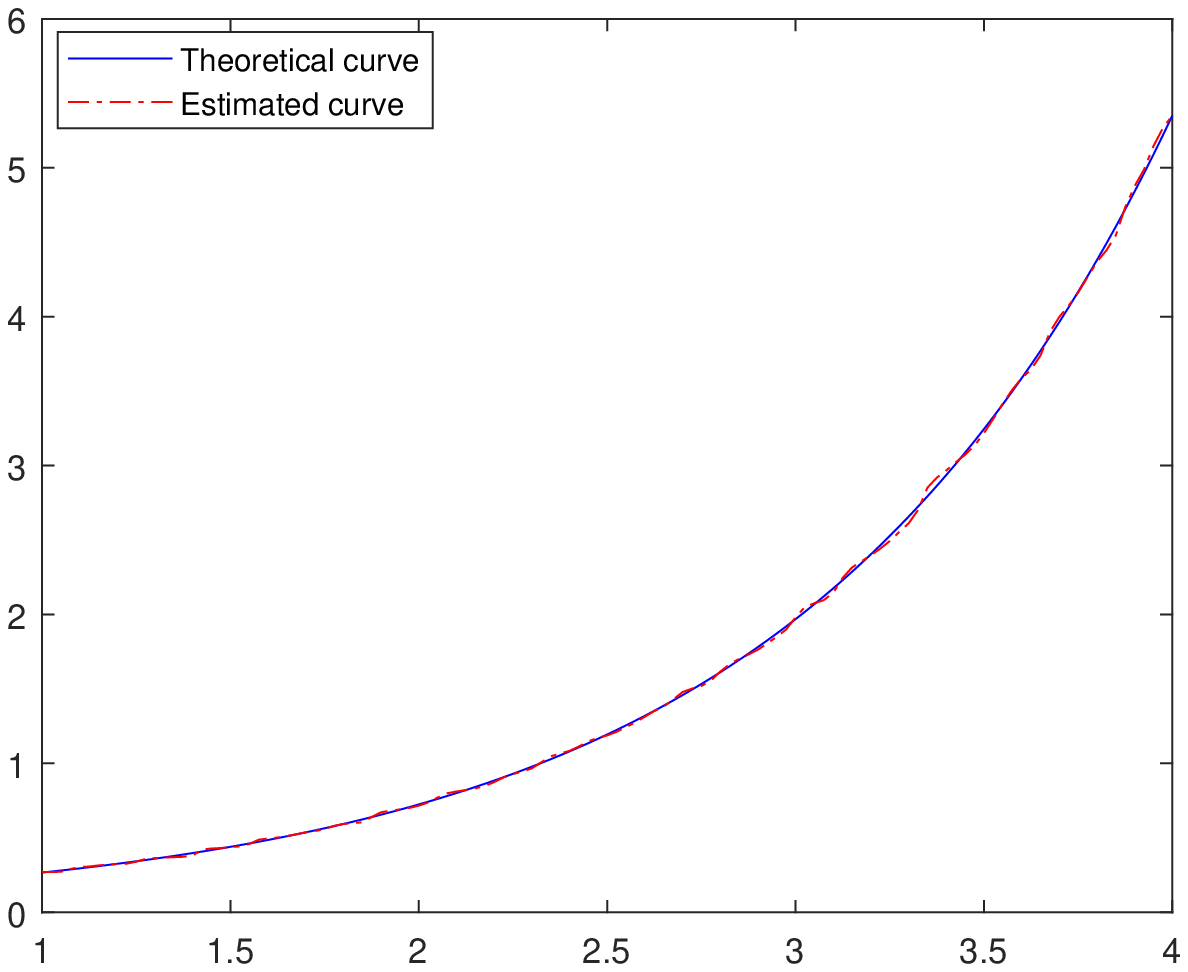}
	\end{minipage}\hfill\hfill
	\caption{\textcolor{blue}{$m(x)$}, \textcolor{red}{$\widehat{m}(x)$} with $c=1$, $\rho=0.7$, C.P.$ \approx 25\%$ and $n=300$}\label{figure5}
\end{figure}
\subsection{Effect of outliers}
To show the robustness of our approach, we generate the case where the data contains outliers. To create this outlier effect, $20$ values of this sample are multiplied by a factor called M.F.. From \hyperref[figure6]{Figure 6}, we can see that our estimator is close to the theoretical curve knowing that we observe only $70\%$ of the trues values. Then, it is absolutely clear that our estimator is resistant in the presence of outliers.
\begin{figure}[!h]
	\begin{minipage}[c]{.26\linewidth}
		\includegraphics[width=1.3\textwidth]{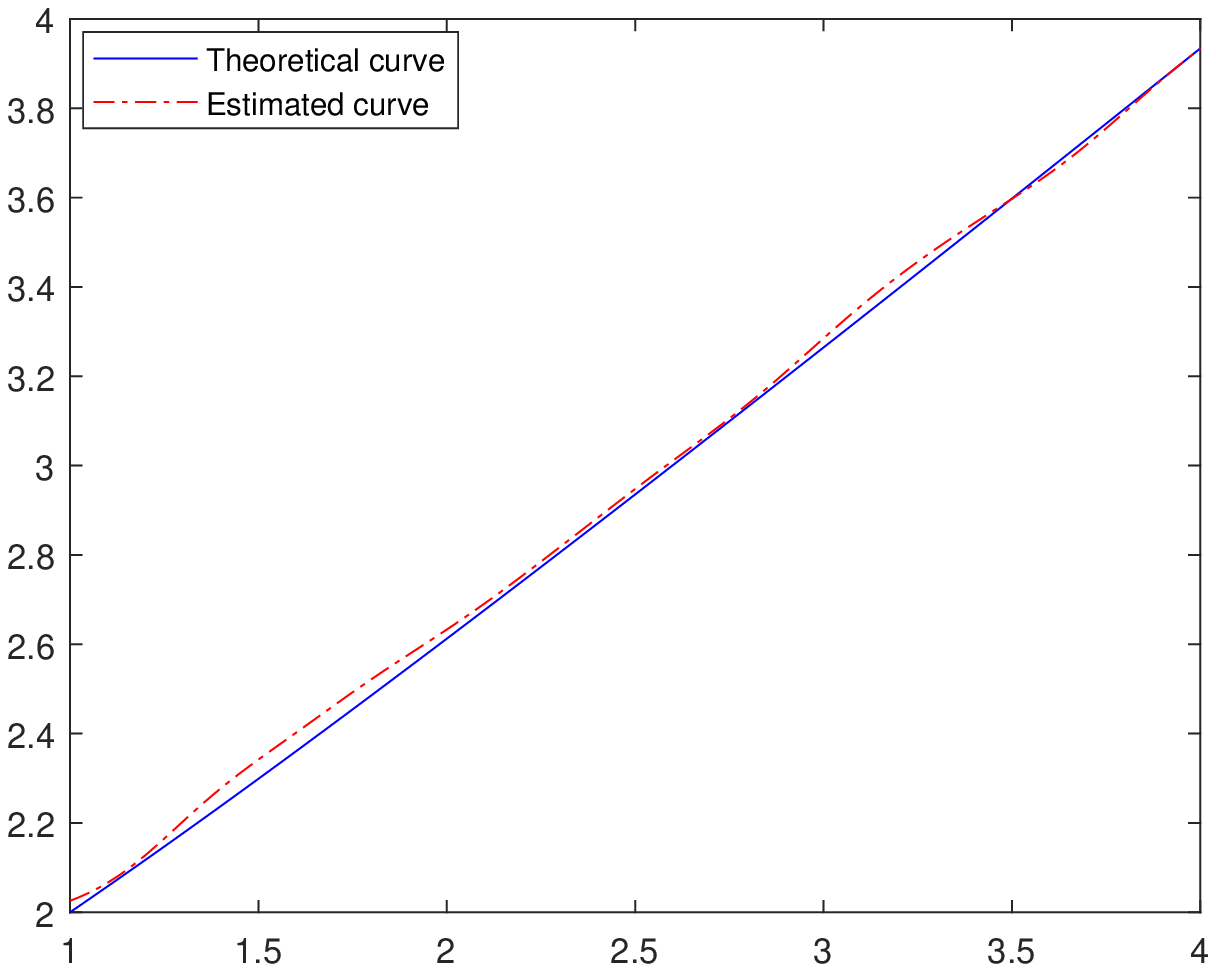}   
	\end{minipage} \hfill
	\begin{minipage}[c]{.26\linewidth}
		\includegraphics[width=1.3\textwidth]{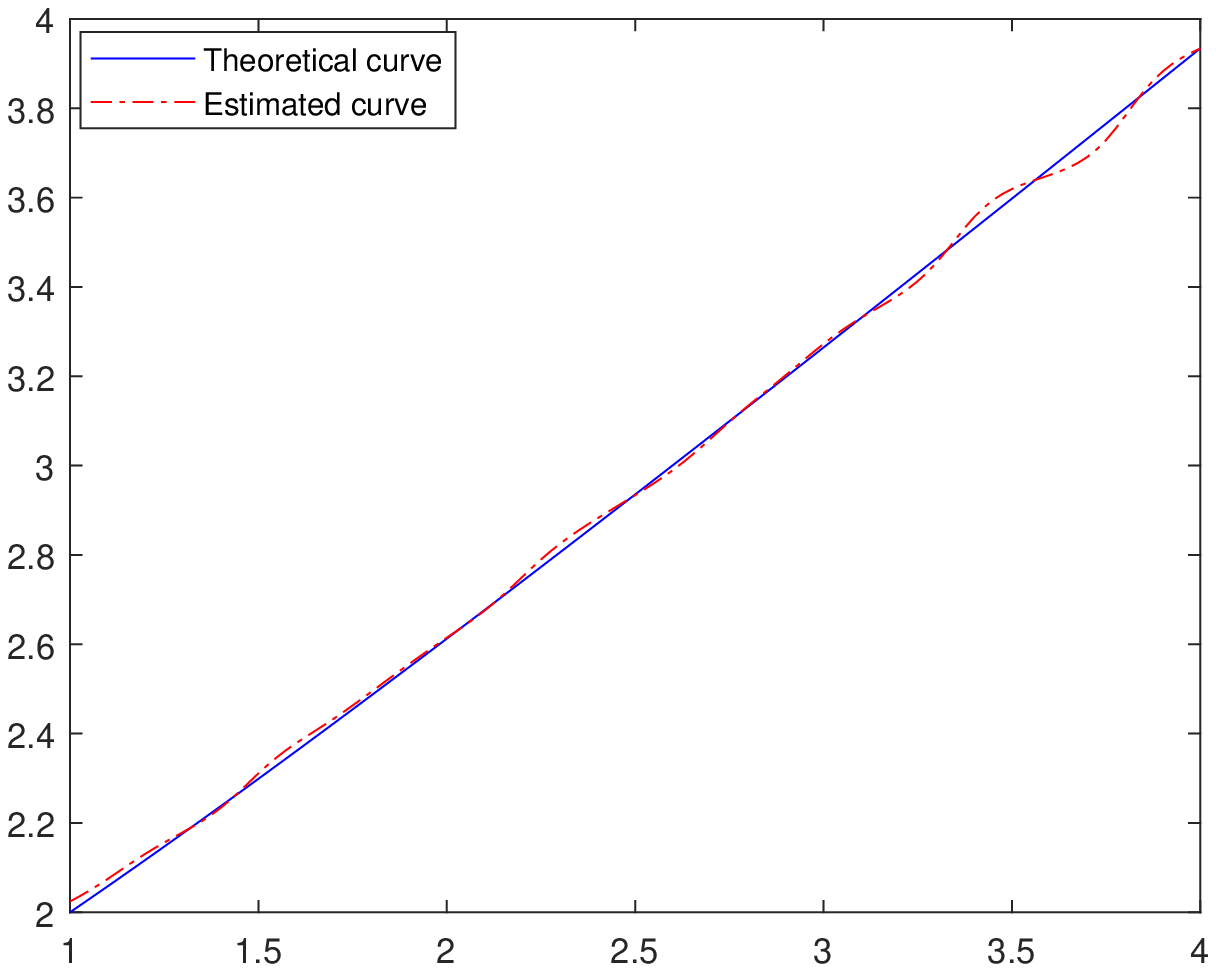}
	\end{minipage} \hfill
	\begin{minipage}[c]{.26\linewidth}
		\includegraphics[width=1.3\textwidth]{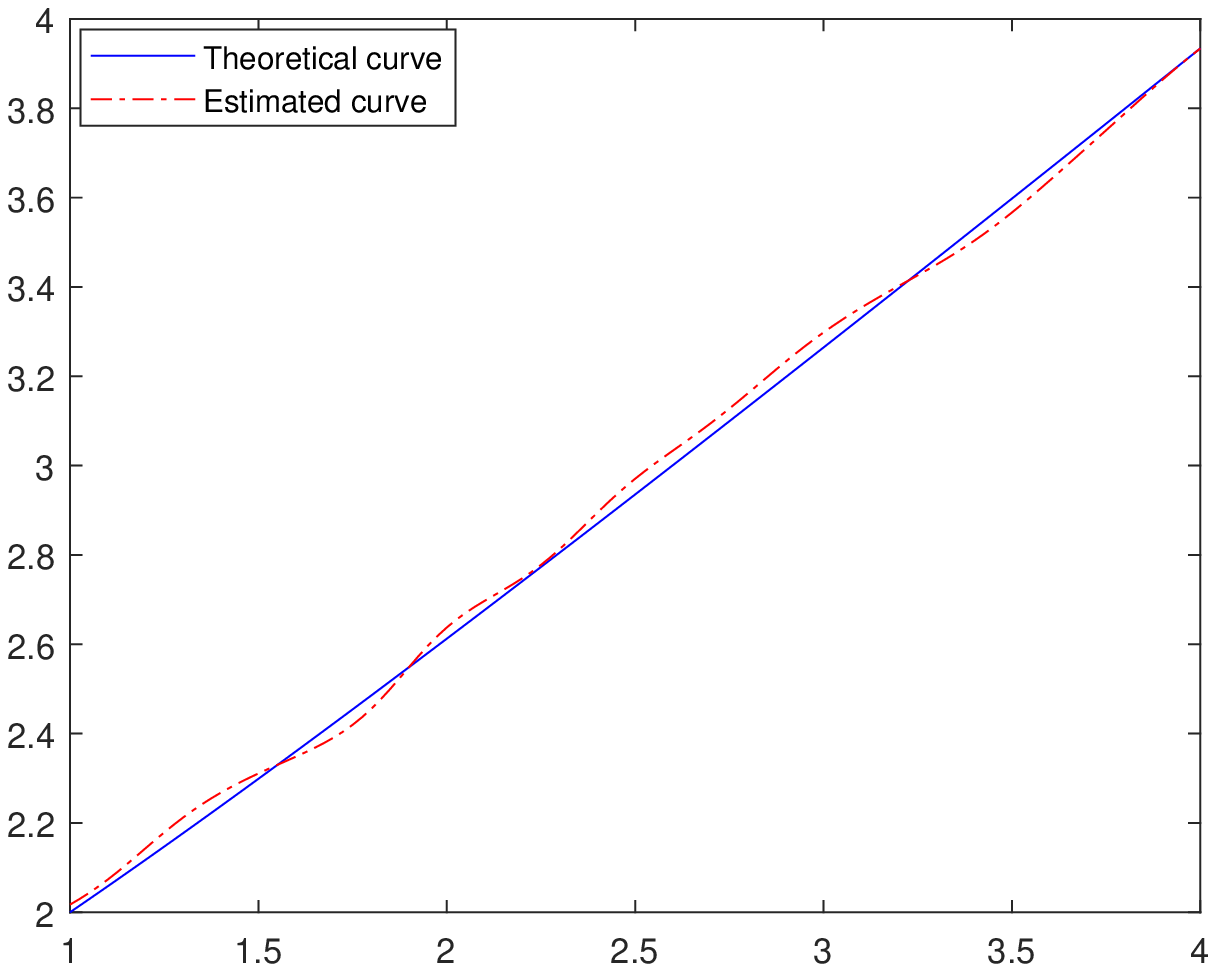}
	\end{minipage}\hfill\hfill
	\caption{\textcolor{blue}{$m(x)$}, \textcolor{red}{$\widehat{m}(x)$}, with $c=1$, $\rho=0.7$, C.P. $\approx 30\%$ $n=300$ and M.F.$=10, 50\; \text{and} \; 100$ respectively.}\label{figure6}
\end{figure}
\subsection{Effect of contamination of the random error $\varepsilon$:} We take the same algorithm as before by changing step 1 which becomes : 
\begin{itemize}
	\item[$\bullet$] Step 1'. $\varepsilon_i \leadsto (1-\beta) \eta_1+\alpha\eta_2$ where $\eta_1 \leadsto \mathcal{N}(0,1)$ and $\eta_2 \leadsto \mathcal{N}(0,\lambda)$. We choose the level of contamination $\beta=0.01,0.05 \;\text{and}\;0.1$ and the magnitude of contamination $\lambda=3$ generally.
\end{itemize}
We observe from (\hyperref[figure7]{Figure 7}) that the quality of the adjustment to the theoretical function deteriorates when the level of contamination $\alpha$ becomes higher.
\begin{figure}[!h]
	\begin{minipage}[c]{.26\linewidth}
		\includegraphics[width=1.3\textwidth]{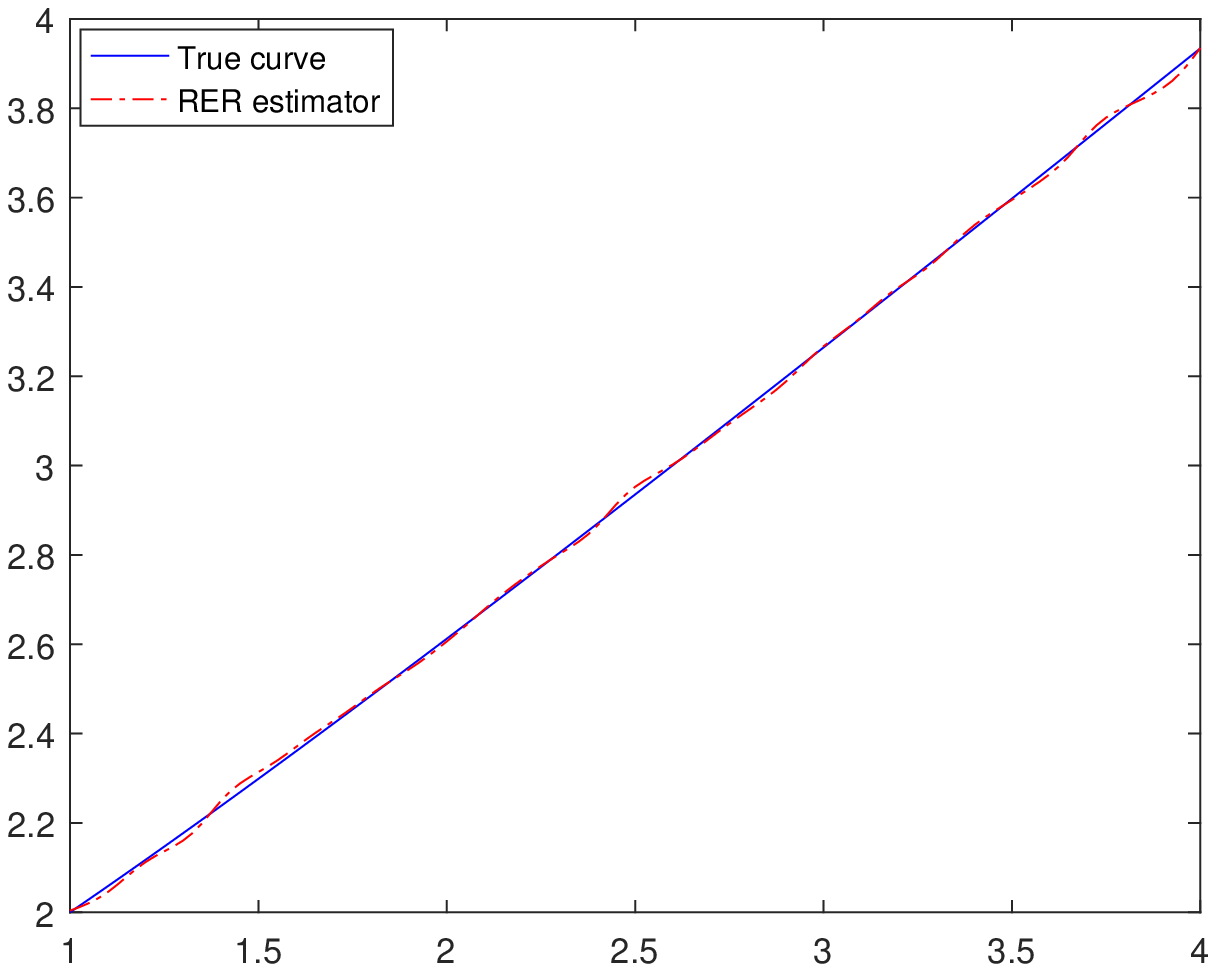}   
	\end{minipage} \hfill
	\begin{minipage}[c]{.26\linewidth}
		\includegraphics[width=1.3\textwidth]{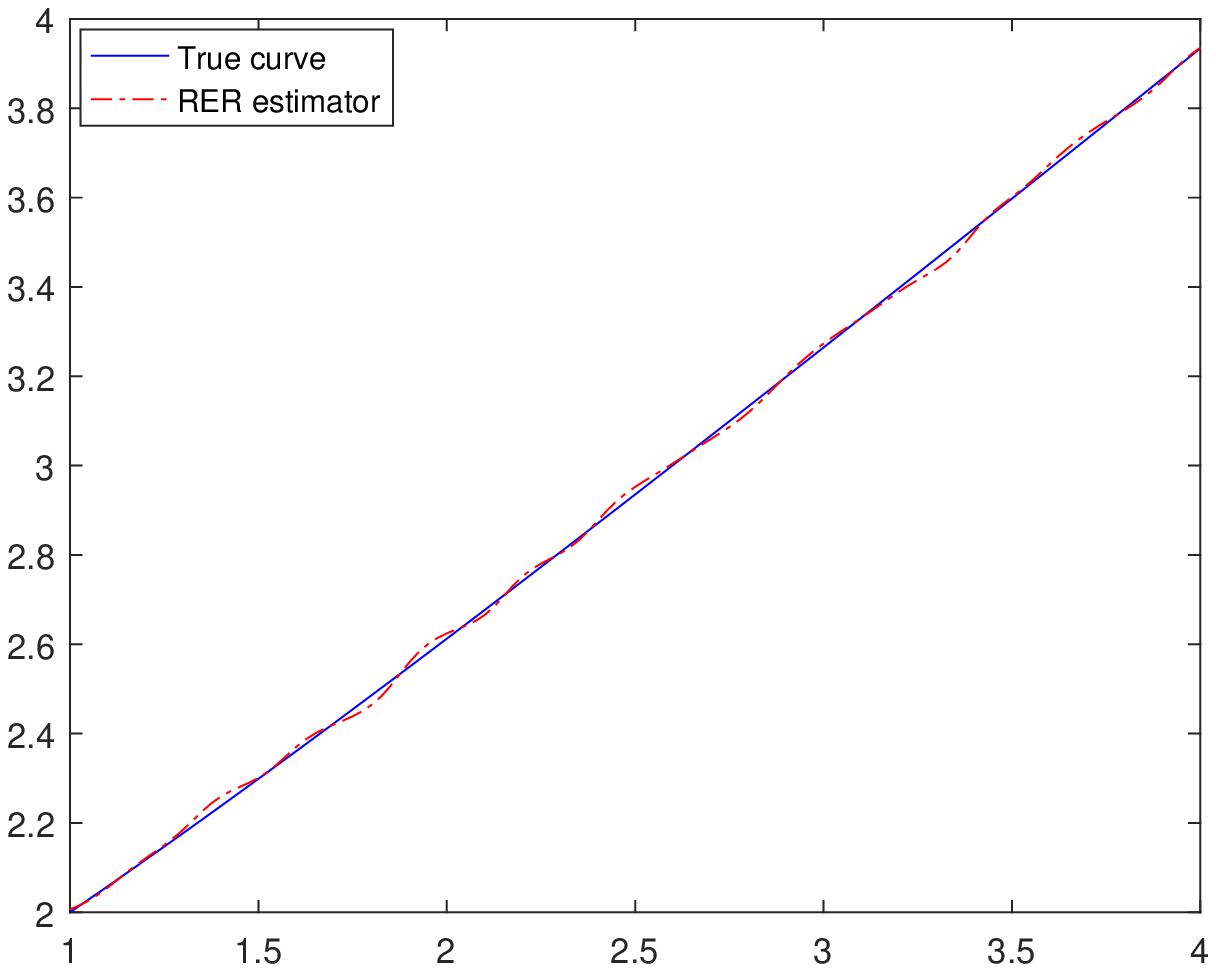}
	\end{minipage} \hfill
	\begin{minipage}[c]{.26\linewidth}
		\includegraphics[width=1.3\textwidth]{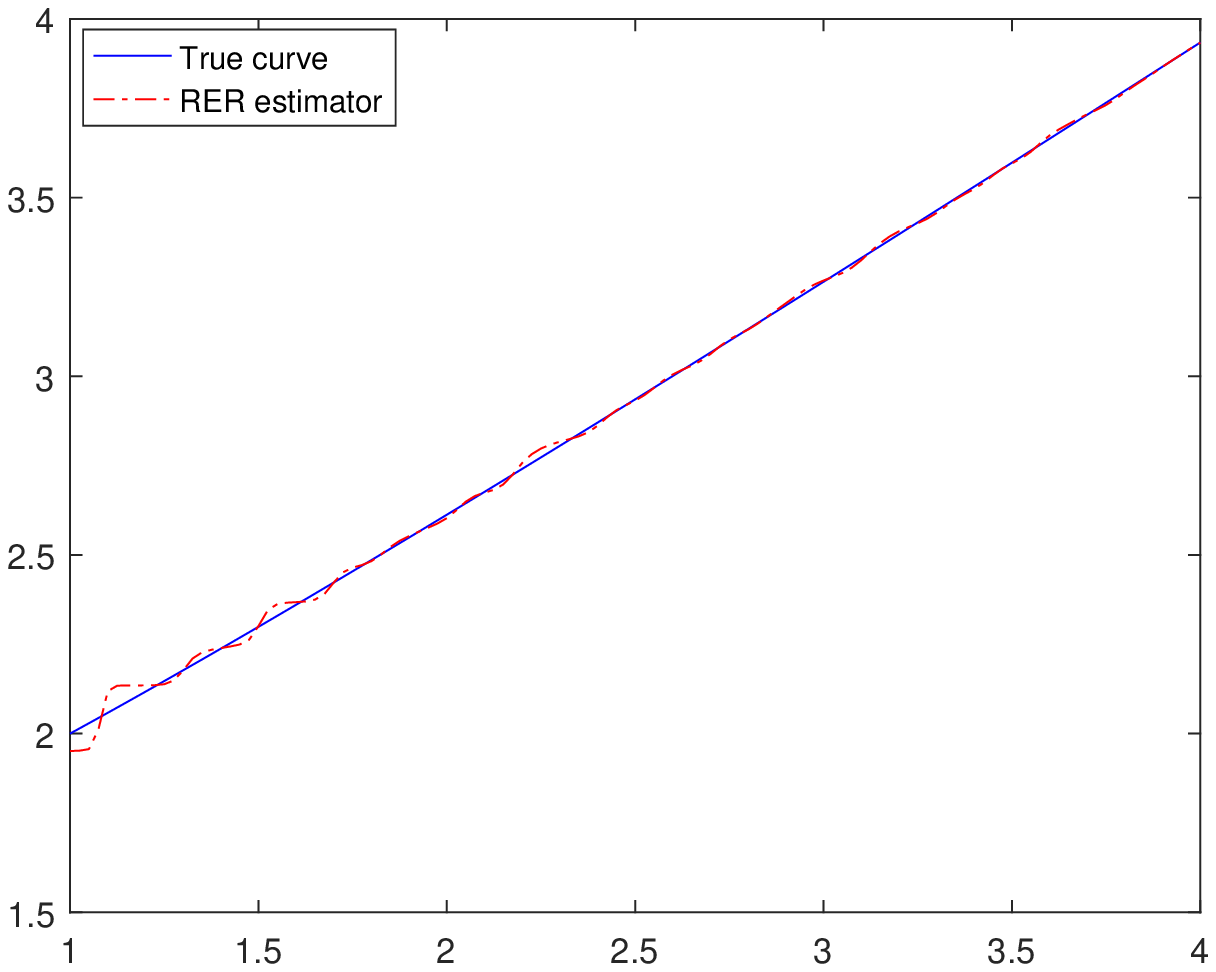}
	\end{minipage}\hfill\hfill
	\caption{\textcolor{blue}{$m(x)$}, \textcolor{red}{$\widehat{m}(x)$}, $c=1$, $\rho=0.7$, $n=300$, C.P. $50\%$ and $\beta=0.01,0.05 \;\text{and}\; 0.1$ respectively.}\label{figure11}
\end{figure}
\subsection{Comparaison study}
To show the efficiency of the RER estimator, we carry out a comparative study in which we consider the classical regression (CR) estimator defined in \hyperref[guessoum]{Guessoum and Ould Sa\"id (2010)} by
\begin{equation}\label{CR}
\mu_n(x)=\frac{\displaystyle{\sum_{i=1}^{n}\frac{\delta_i Y_i}{\bar{G}_n(Y_i)}K_d(x-X_i)}}{\displaystyle{\sum_{i=1}^{n}K_d(x-X_i)}},
\end{equation}
for weak and strong dependency.
\subsubsection{Weak dependency}
\paragraph{$\bullet$ Effect of C.P.:} We fix $n$ and we vary the censoring rate. We can notice clearly (from \hyperref[figure8]{Figure 8}) that the RER estimator is near to the theoretical curve whereas the CR curve is distant from the true curve when C.P. increases. 
\begin{figure}[!h]
	\begin{minipage}[c]{.26\linewidth}
		\includegraphics[width=1.3\textwidth]{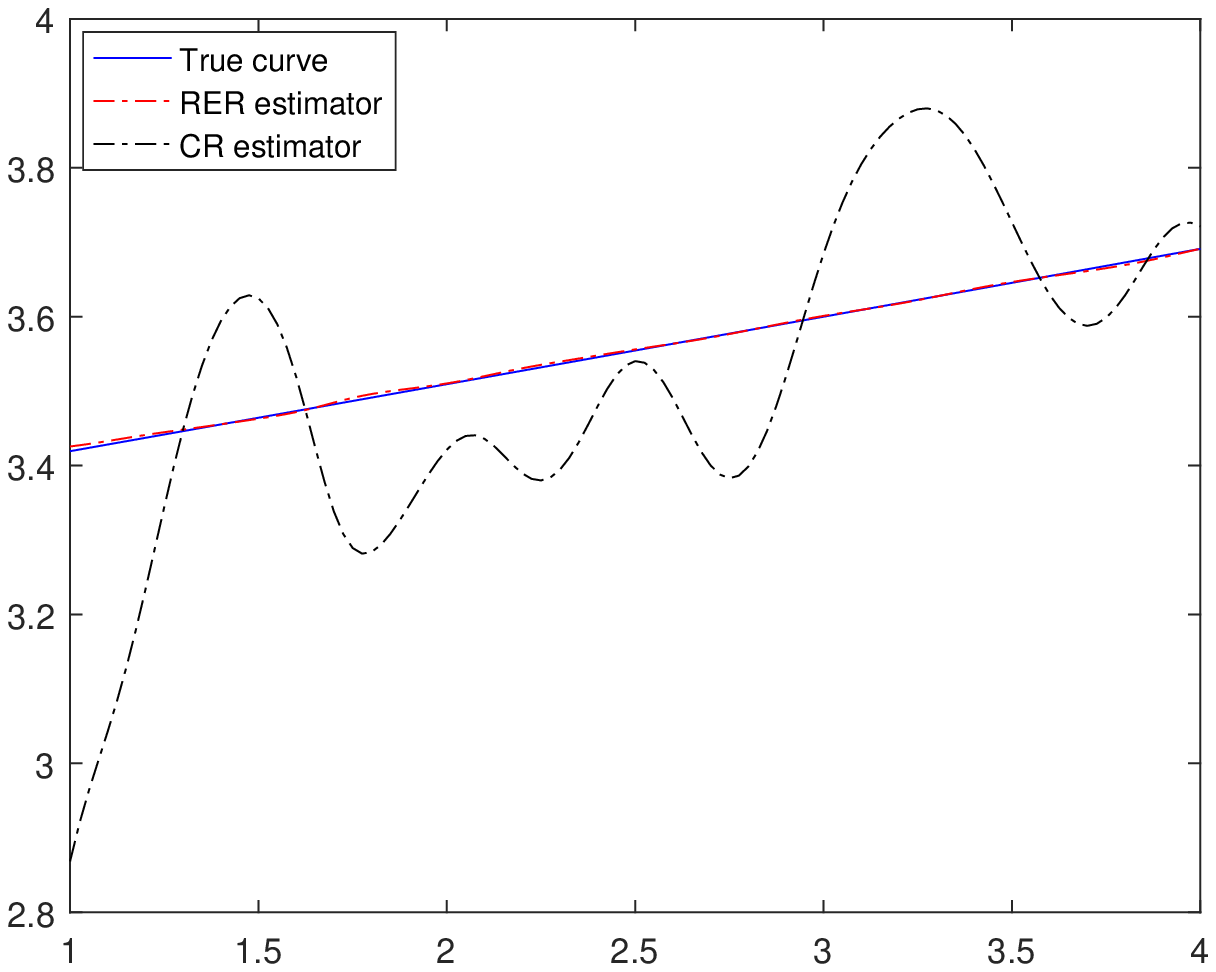}   
	\end{minipage} \hfill
	\begin{minipage}[c]{.26\linewidth}
		\includegraphics[width=1.3\textwidth]{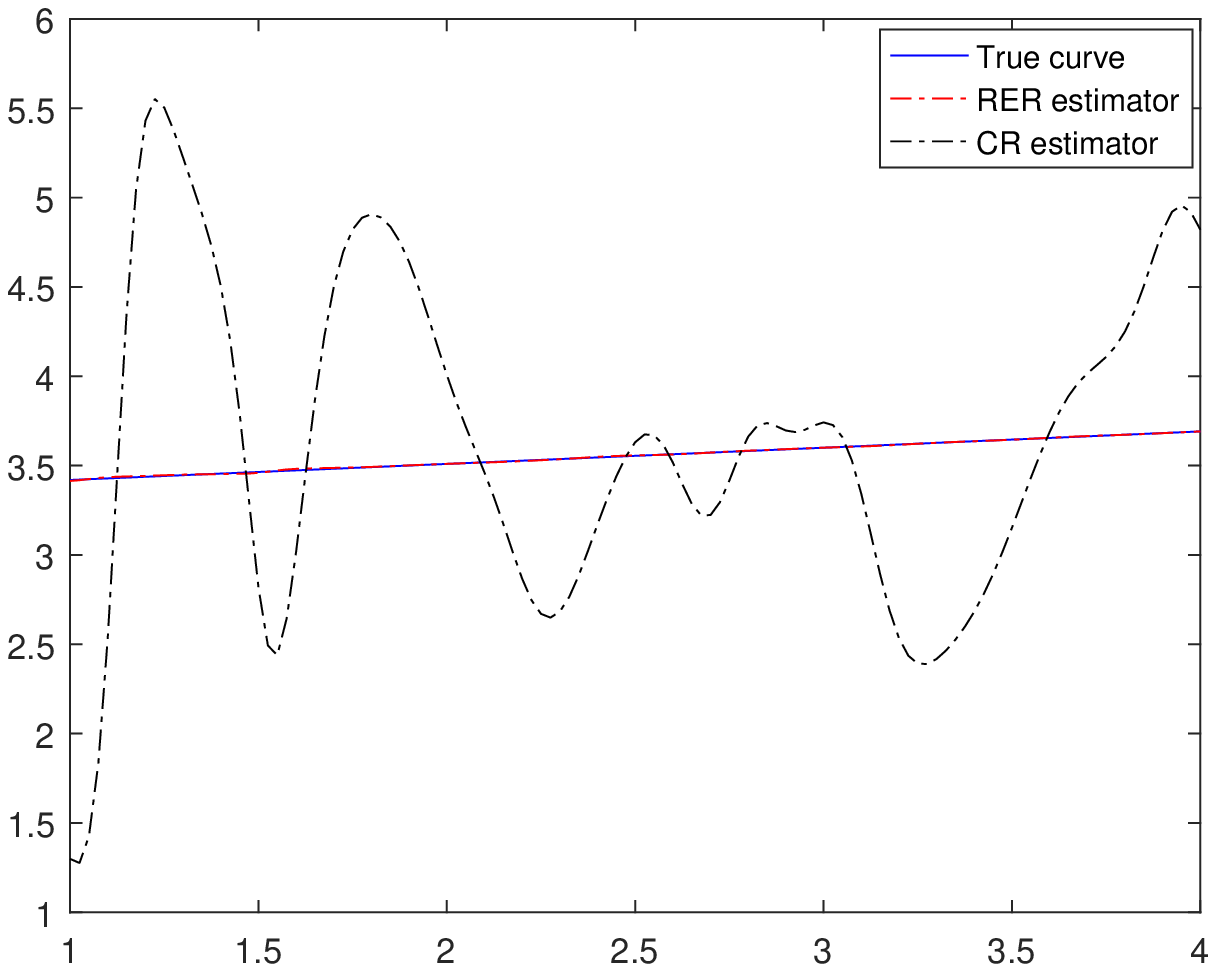}
	\end{minipage} \hfill
	\begin{minipage}[c]{.26\linewidth}
		\includegraphics[width=1.3\textwidth]{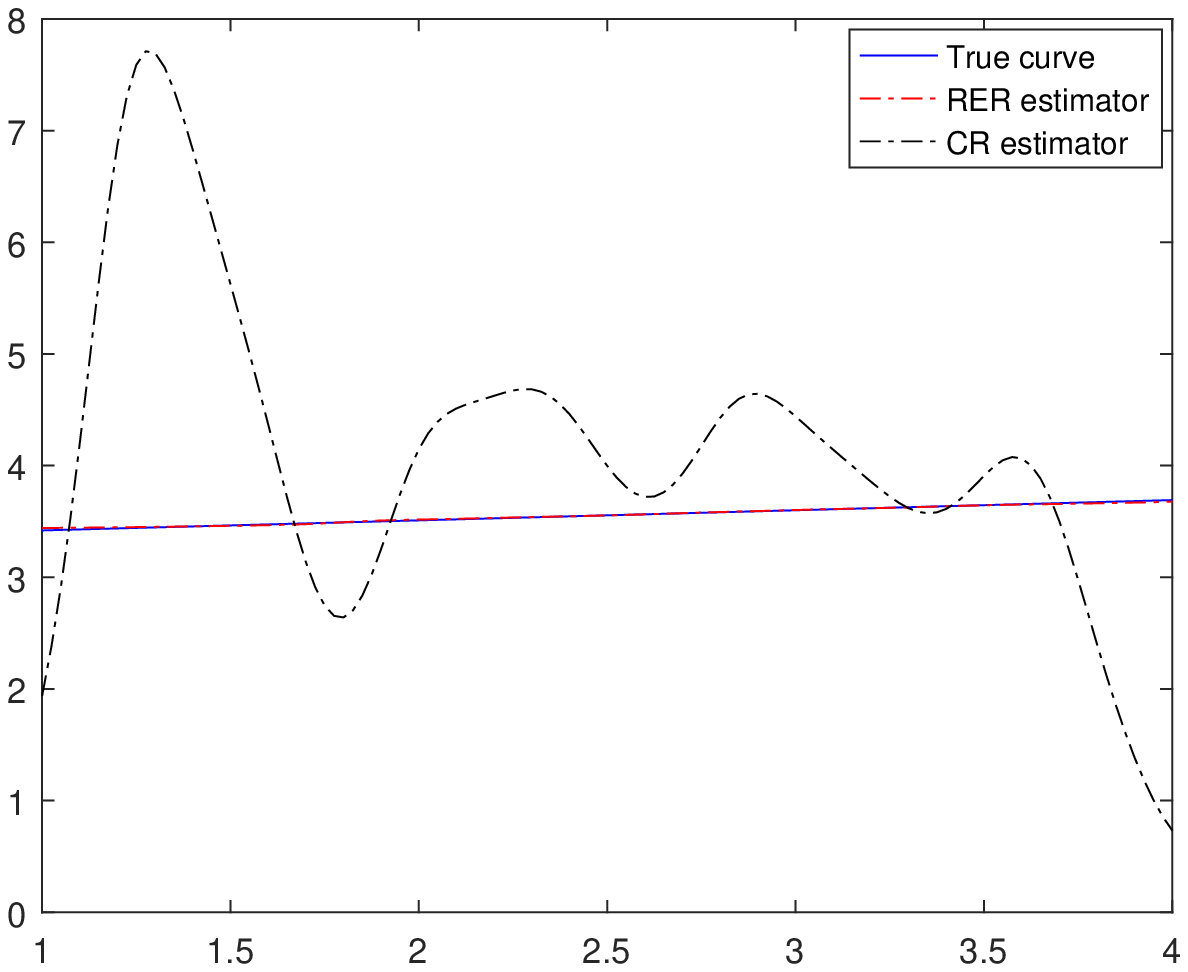}
	\end{minipage}\hfill\hfill
	\caption{\textcolor{blue}{$m(x)$}, \textcolor{red}{$\widehat{m}(x)$}, $\mu_n(x)$ with $c=3$, $\rho=0.1$, $n=300$ and C.P. $\approx 10, 50 \;\text{and}\; 80\%$ respectively.}\label{figure7}
\end{figure}
\paragraph{$\bullet$ Effect of outliers:} We fix $n$, C.P. and we vary the M.F. It can be reported from \hyperref[figure9]{Figure 9} that the RER estimator is overlapped on the true curve in contrast with the CR estimator which is significantly affected by the M.F. when the dependency is weak.
\begin{figure}[!h]
	\begin{minipage}[c]{.26\linewidth}
		\includegraphics[width=1.3\textwidth]{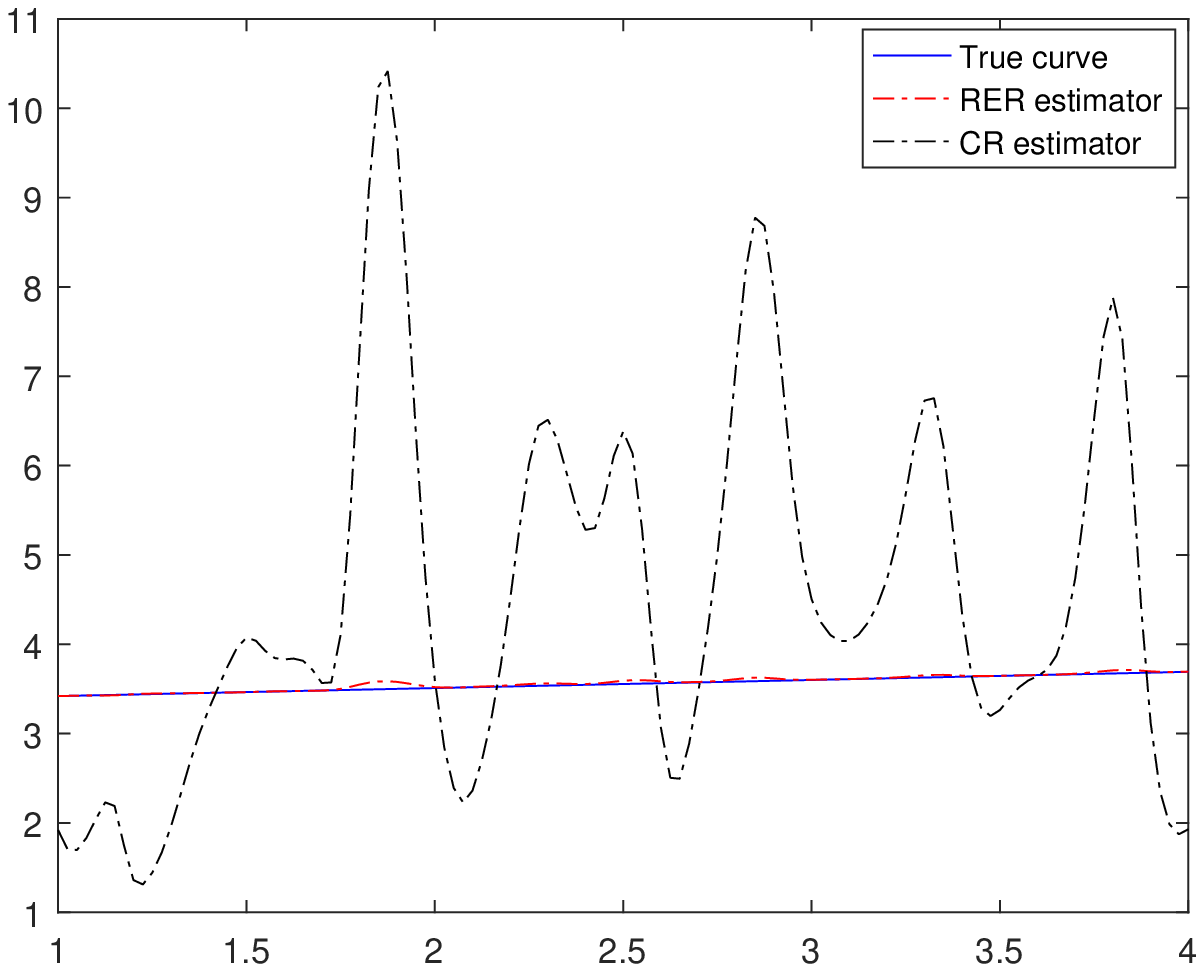}
	\end{minipage} \hfill
	\begin{minipage}[c]{.26\linewidth}
		\includegraphics[width=1.3\textwidth]{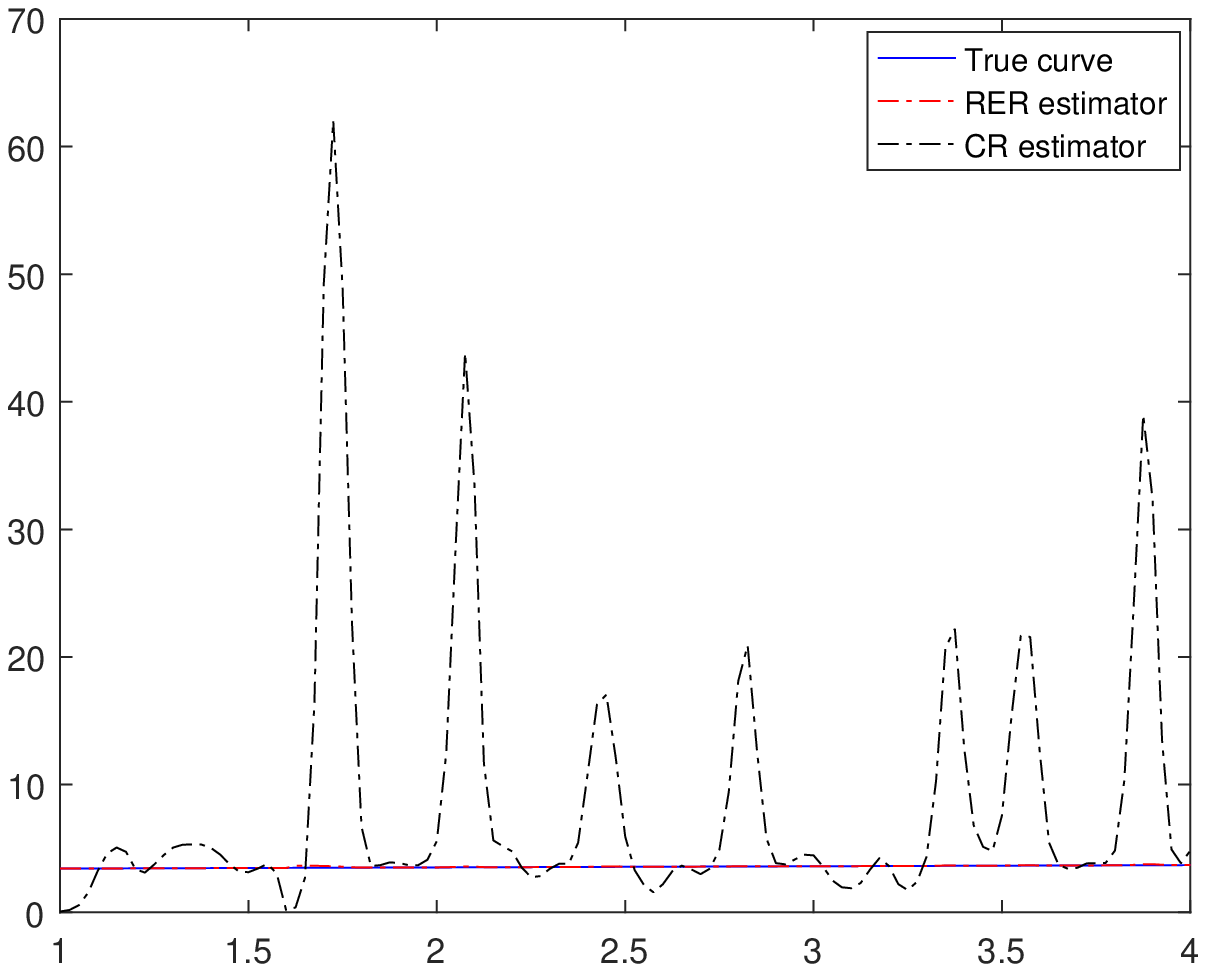}
	\end{minipage} \hfill
	\begin{minipage}[c]{.26\linewidth}
		\includegraphics[width=1.3\textwidth]{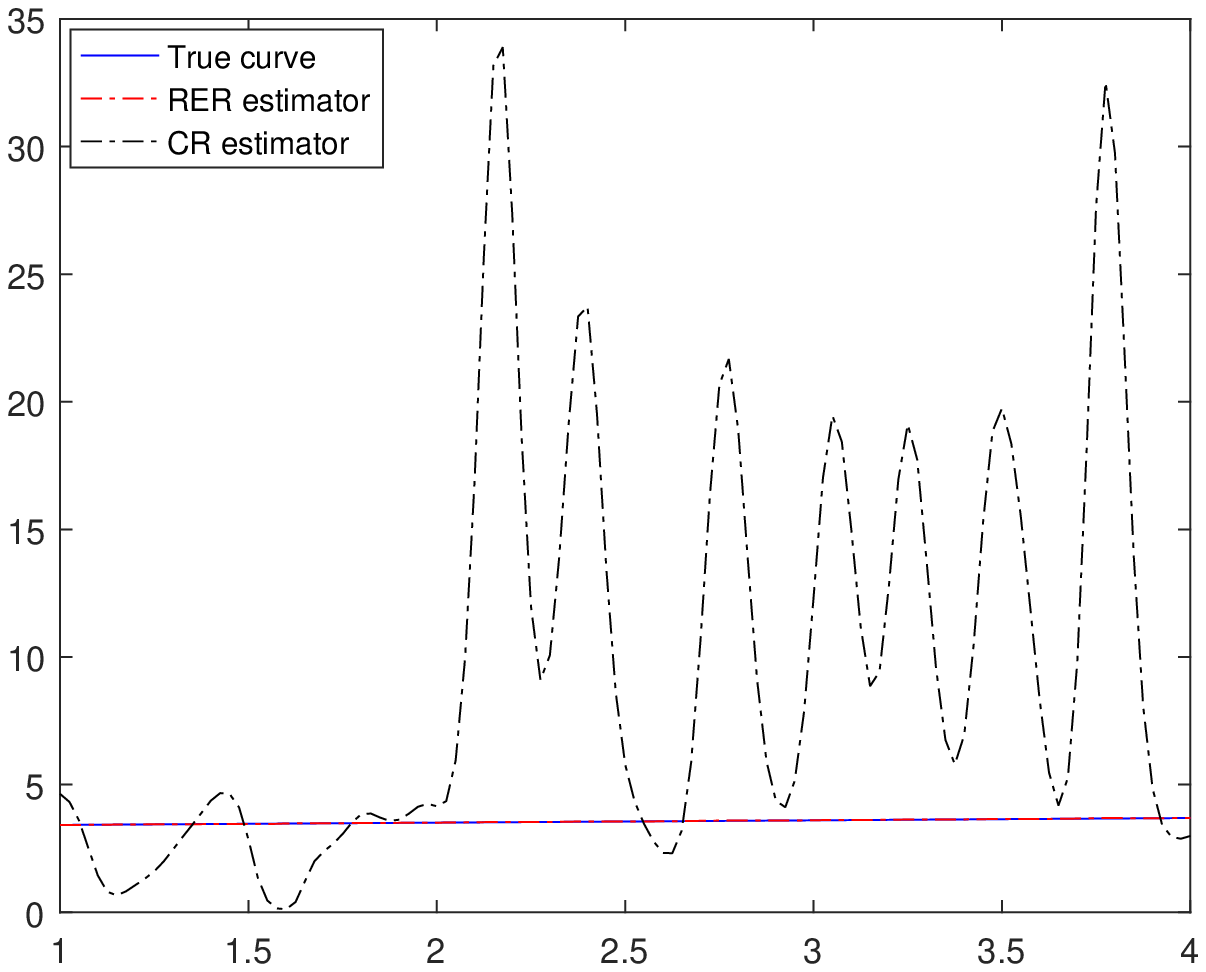}
	\end{minipage}\hfill\hfill
	\caption{\textcolor{blue}{$m(x)$}, \textcolor{red}{$\widehat{m}(x)$}, $\mu_n(x)$ with $c=3$, $\rho=0.1$, $n=300$, C.P. $\approx 35$ and M.F.=$10, 25 \;\text{and}\; 50\%$ respectively.}\label{figure8}
\end{figure}
\subsubsection{Strong dependency}
\paragraph{$\bullet$ Effect of C.P.:} We fix $\rho$, $n$ and we vary the C.P. to examine the effect of censorship on both RER and CR estimators when the dependency is strong. We can observe from \hyperref[figure10]{Figure 10} that the RER estimator 
\begin{figure}[!h]
	\begin{minipage}[c]{.26\linewidth}
		\includegraphics[width=1.3\textwidth]{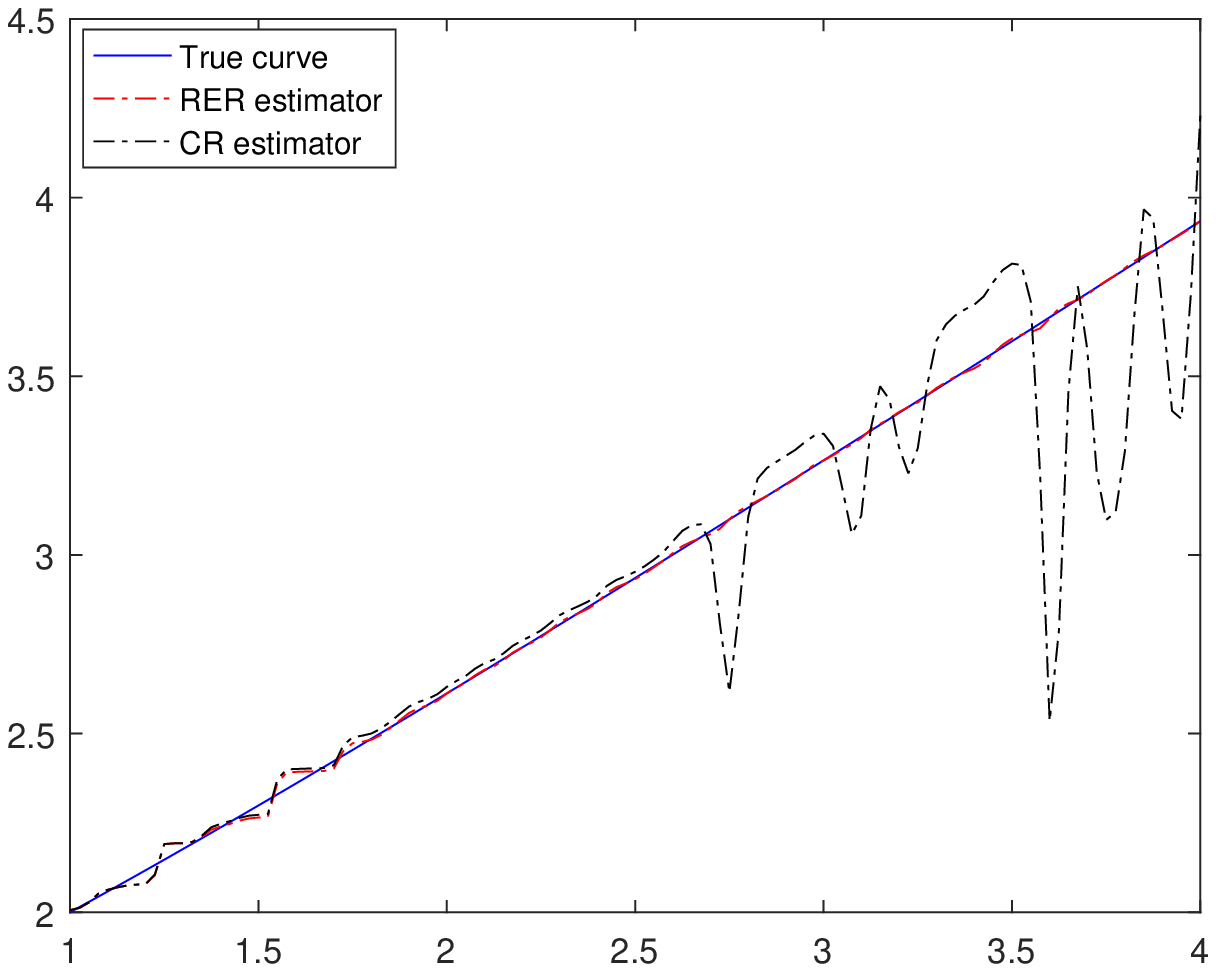}   
	\end{minipage} \hfill
	\begin{minipage}[c]{.26\linewidth}
		\includegraphics[width=1.3\textwidth]{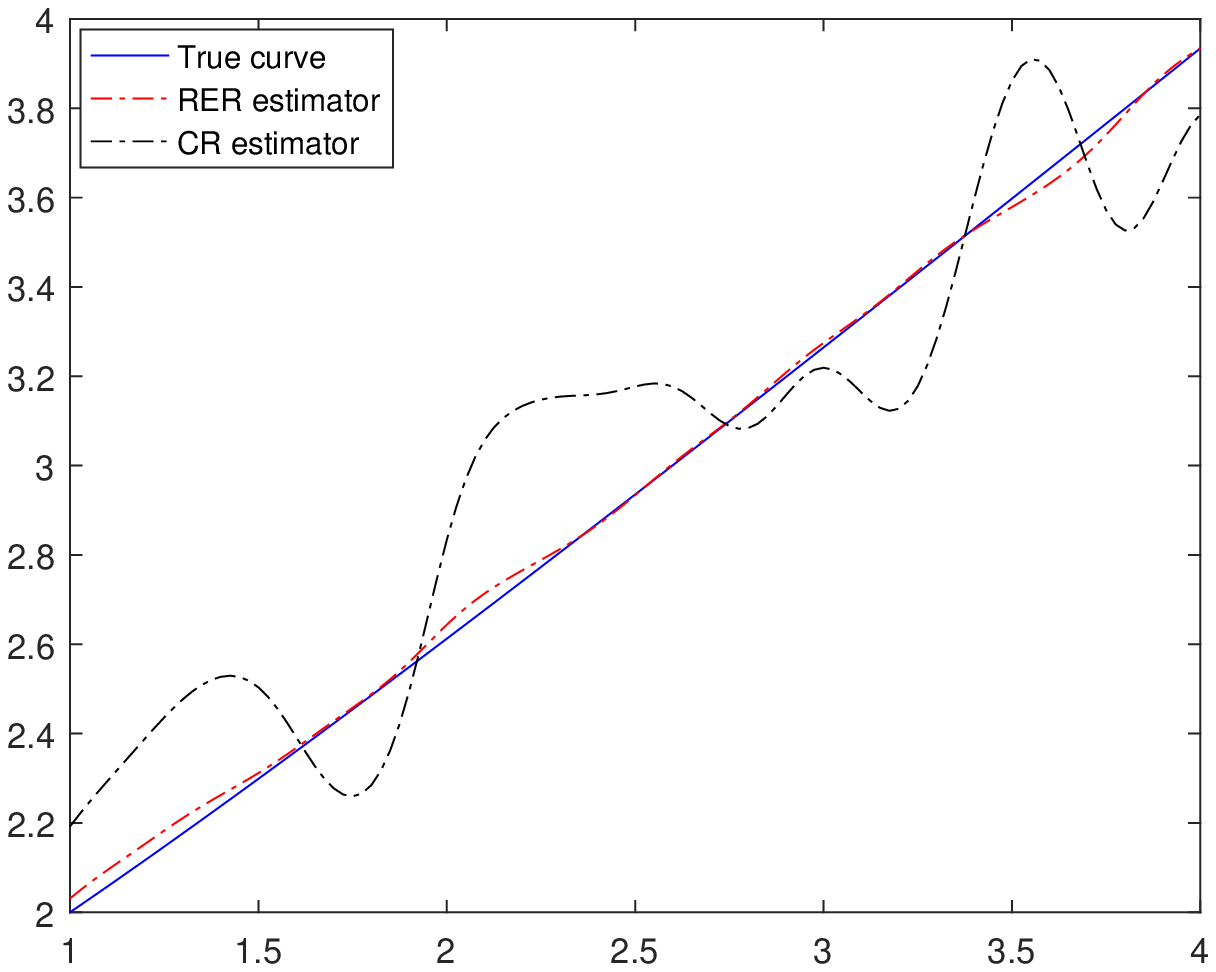}
	\end{minipage} \hfill
	\begin{minipage}[c]{.26\linewidth}
		\includegraphics[width=1.3\textwidth]{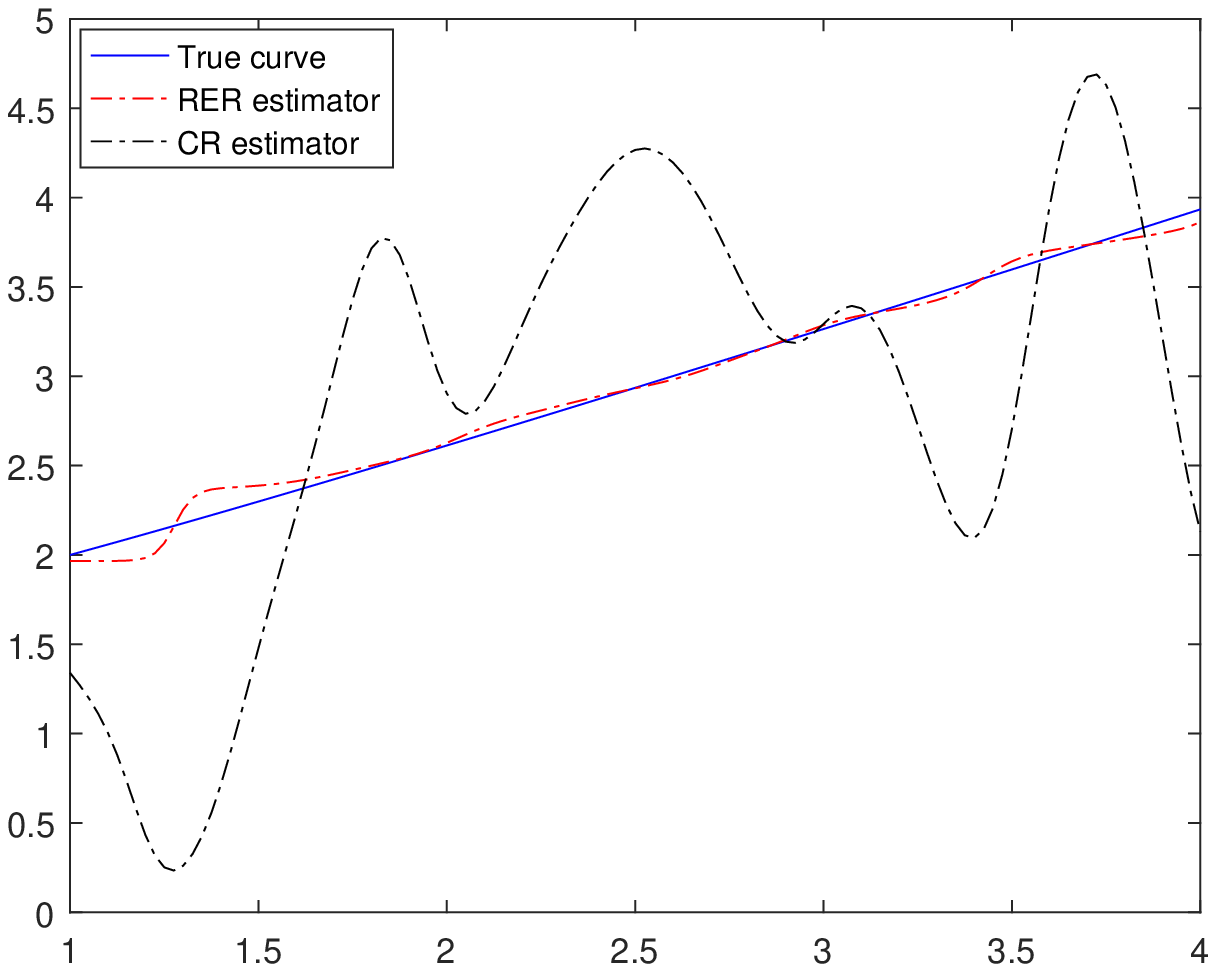}
	\end{minipage}\hfill\hfill
	\caption{\textcolor{blue}{$m(x)$}, \textcolor{red}{$\widehat{m}(x)$}, $\mu_n(x)$ with $c=1$, $\rho=0.7$, $n=300$ and C.P. $\approx 10, 50 \;\text{and}\; 80\%$ respectively.}\label{figure9}
\end{figure}
\paragraph{$\bullet$ Effect of outliers:} We fix $\rho$, $n$, C.P. and we vary the M.F. to evaluate the effect of outliers on both estimators when the dependency is high. As expected, our estimator remains resistant to outliers under a high dependency unlike that of classical regression which is more distant when the M.F. becomes large se  \hyperref[figure11]{Figure 11}.
\begin{figure}[!h]
	\begin{minipage}[c]{.26\linewidth}
		\includegraphics[width=1.3\textwidth]{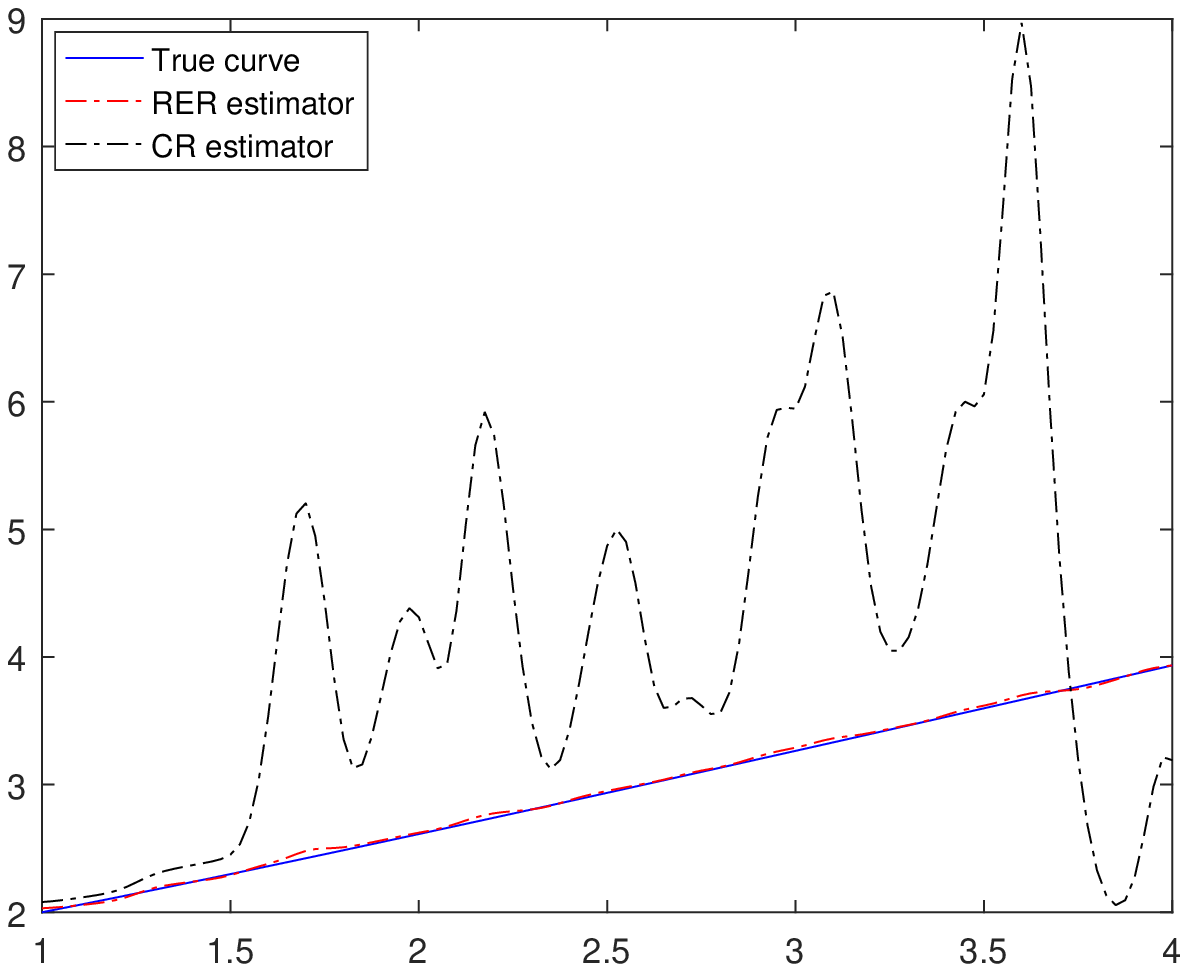}   
	\end{minipage} \hfill
	\begin{minipage}[c]{.26\linewidth}
		\includegraphics[width=1.3\textwidth]{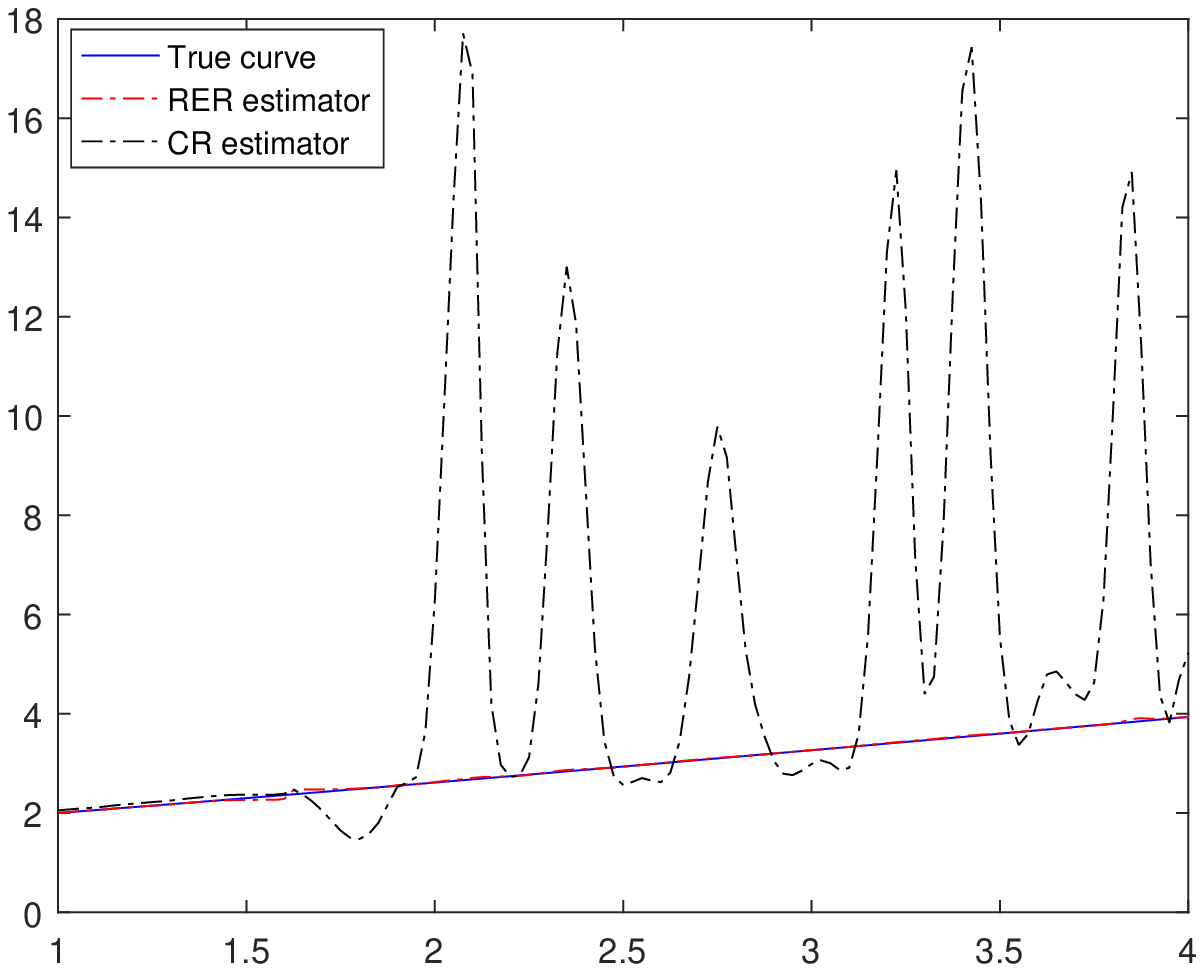}
	\end{minipage} \hfill
	\begin{minipage}[c]{.26\linewidth}
		\includegraphics[width=1.3\textwidth]{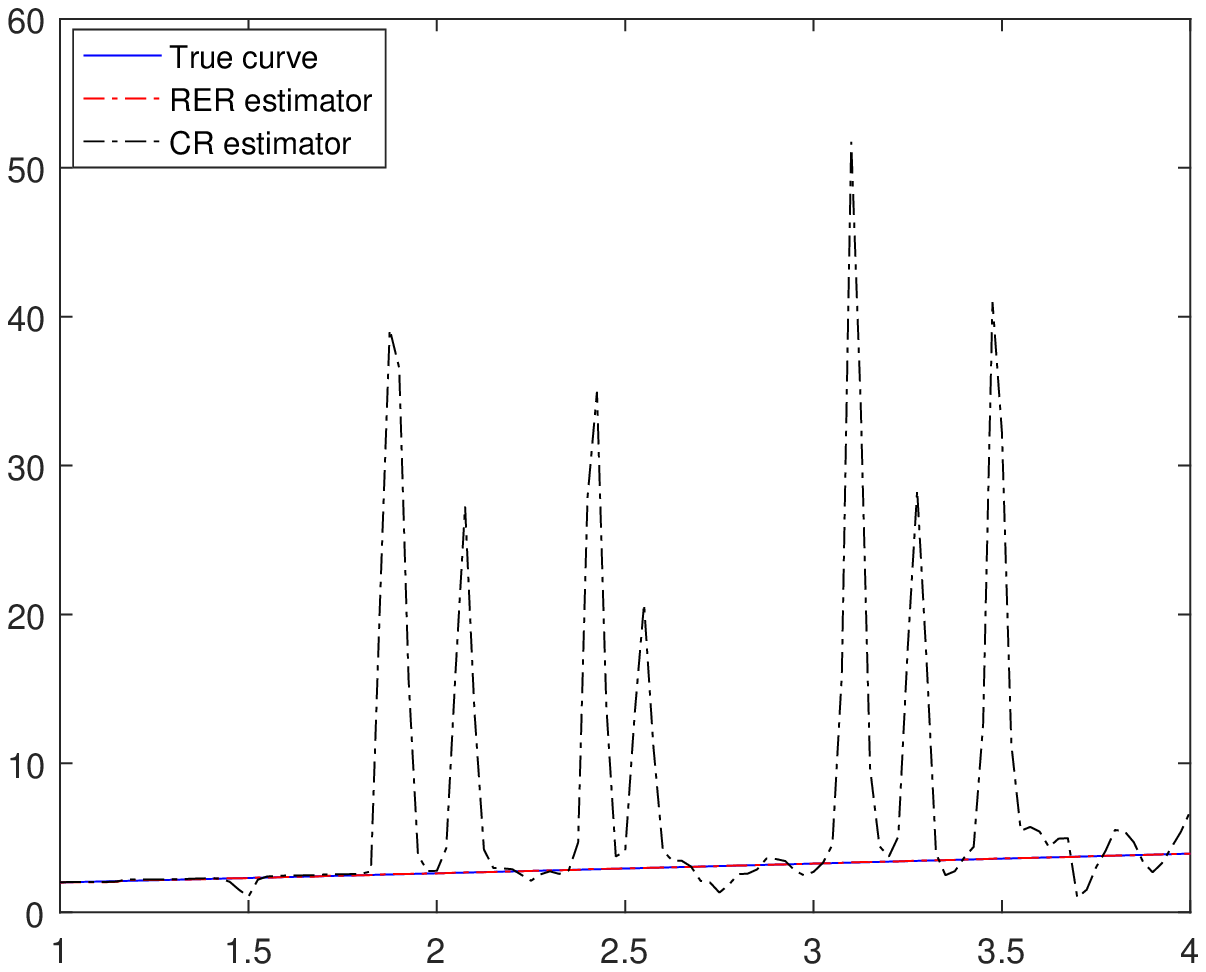}
	\end{minipage}\hfill\hfill
	\caption{\textcolor{blue}{$m(x)$}, \textcolor{red}{$\widehat{m}(x)$}, $\mu_n(x)$ with $c=1$, $\rho=0.7$, $n=300$, C.P. $\approx 35$ and M.F.=$10, 25 \;\text{and}\; 50\%$ respectively.}\label{figure10}
\end{figure}
\subsection{Discussion}
In this paper, an estimator for the relative error regression function on the multivariate case has been proposed, when the data are dependent and are subject to censoring.
After analyzing and comparing with the CR estimator, we have the following remarks. As expected, the asymptotic behavior of the RER estimator is better for a weak dependency (a small value of $\rho$) and a low censorship rate which is conforted by the numerical study in \hyperref[Sec4]{Sect. 4}, where we show how the quality of the estimation is influenced by several parameters (C.P., $\rho$, M.F., $n$). \\
Now, concerning the behavior of the RER estimator compared to the CR estimator, we can remark that the comportement of the RER remained almost unchanged in all our results in comparison with the CR estimator, which is significantly affected by the presence of outliers and censorship in the sample. Another interesting remark related to dependency is the fact that with small $\rho$ the estimator remains resistant. 
\section{Technical lemmas and proofs}\label{Sect5}
We split the proof of \hyperref[theorem]{Theorem 1} into following \hyperref[lem1]{Lemmatas 1-4}.
\begin{lem}\label{lem1}
	Under hypotheses \hyperref[K2]{\textbf{K2}} and \hyperref[D1]{\textbf{D1}}, for $\ell=1,2$,  we have
	\begin{equation*}
	\sup_{x \in \mathcal{C}}| \E [\tilde{r}_{\ell}(x)]- r_{\ell}(x)| =O_{a.s.}\left(h_n\right)  \quad as \quad n \longrightarrow \infty.
	\end{equation*}
\end{lem}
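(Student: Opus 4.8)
The quantity $\E[\tilde r_\ell(x)]-r_\ell(x)$ is deterministic, so the $O_{a.s.}$ is in fact a (non-random) $O$, and the plan is to compute the expectation exactly and then Taylor-expand. Recalling that $\tilde r_\ell(x)=\frac{1}{nh_n^{d}}\sum_{i=1}^{n}\frac{\delta_i Y_i^{-\ell}}{\overline G(Y_i)}\,K_d\!\big(\tfrac{x-X_i}{h_n}\big)$ and using stationarity of $(Y_i,\delta_i,X_i)_i$, it is enough to deal with the single term $i=1$. The first step is the standard removal of the censoring bias: conditioning on $(T_1,X_1)$, using that $C_1$ is independent of $(T_1,X_1)$, that $G$ is continuous, and that $Y_1=T_1$ on $\{\delta_1=1\}$, one gets
\[
\E\!\left[\frac{\delta_1 Y_1^{-\ell}}{\overline G(Y_1)}\ \Big|\ T_1,X_1\right]
=\frac{T_1^{-\ell}}{\overline G(T_1)}\;\mathbb P\!\left(C_1\ge T_1\mid T_1\right)=T_1^{-\ell}.
\]

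Taking the outer expectation and integrating the survival-time variable against the joint density $f_{X,T}$ then yields $\E[\tilde r_\ell(x)]=h_n^{-d}\int_{\R^d}K_d\!\big(\tfrac{x-u}{h_n}\big)\,r_\ell(u)\,du$; the change of variable $u=x-h_n z$ together with $\int_{\R^d}K_d=1$ turns this into $\int_{\R^d}K_d(z)\,r_\ell(x-h_nz)\,dz$, so that
\[
\E[\tilde r_\ell(x)]-r_\ell(x)=\int_{\R^d}K_d(z)\big(r_\ell(x-h_nz)-r_\ell(x)\big)\,dz .
\]
By \textbf{D1}, $r_\ell$ is continuously differentiable with partial derivatives bounded on $\mathcal C$ (hence on a compact neighbourhood of it), so a first-order Taylor expansion gives $|r_\ell(x-h_nz)-r_\ell(x)|\le h_n\,\|z\|\max_{1\le i\le d}\sup|\partial r_\ell/\partial x_i|$ uniformly over $x\in\mathcal C$. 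Plugging this in and using \textbf{K2}, namely $\int_{\R^d}\|z\|K_d(z)\,dz<\infty$, gives
\[
\sup_{x\in\mathcal C}\big|\E[\tilde r_\ell(x)]-r_\ell(x)\big|\ \le\ h_n\,\Big(\max_{1\le i\le d}\sup\Big|\frac{\partial r_\ell}{\partial x_i}\Big|\Big)\int_{\R^d}\|z\|\,K_d(z)\,dz\ =\ O(h_n),
\]
which is the asserted bound.

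The two points that require a little care are: (i) the censoring identity above, which relies on the conditional independence of $C$ from $(T,X)$ and on the continuity of $G$ that makes $\mathbb P(C_1\ge T_1\mid T_1)=\overline G(T_1)$ — this is where the model assumptions genuinely enter; and (ii) the uniformity of the Taylor bound, since $x\in\mathcal C$ does not force $x-h_nz\in\mathcal C$: one should either read \textbf{D1} as giving the derivative bound on a slightly enlarged compact set, or split the integral over $\{\|z\|\le \delta/h_n\}$ (where the bound applies for $n$ large) and the tail $\{\|z\|>\delta/h_n\}$, which is negligible by \textbf{K2}. I expect (ii) to be the only genuinely delicate step; the remaining manipulations are routine.
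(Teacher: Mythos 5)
Your proof is correct and follows essentially the same route as the paper's: removing the censoring bias via the conditional-expectation identity, a change of variables, and a first-order Taylor expansion controlled by \textbf{K2} and \textbf{D1}. You are in fact more explicit than the paper on the two delicate points — the identity $\E\big[\delta_1 Y_1^{-\ell}/\overline G(Y_1)\mid T_1,X_1\big]=T_1^{-\ell}$ and the uniformity of the Taylor bound when $x-h_nz$ leaves $\mathcal C$ — which the paper subsumes under ``properties of conditional expectation'' and an unexamined mean-value point $\zeta$.
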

\begin{proof}
	The proof is standard in the sense that it is not affected by the dependency structure. Using the properties of conditional expectation, a change of variable, and Taylor's expansion $\zeta \in ]x-h_n t,x[$, we have under hypotheses \hyperref[K2]{\textbf{K2}},  \hyperref[D1]{\textbf{D1}} and for $\ell=1,2$
	\begin{equation*}
	\begin{aligned}
	\E [\tilde{r}_{\ell}(x)]- r_{\ell}(x)& =h_n^{-d}\int_{\R^d}K_d(x-u) m_{\ell}(u)f(u)du-r_{\ell}(x)\\
	& = h_n^{-d}\int_{\R^d}K_d(x-u) [r_{\ell}(u)-r_{\ell}(x)]du\\
	&=\int_{\R^d}K_d(t) [r_{\ell}(x-h_n t)-r_{\ell}(x)]dt\\
	\end{aligned}
	\end{equation*}
	then
	\begin{equation*}
	\begin{aligned}
	\sup_{x \in \mathcal{C}}| \E [\tilde{r}_{\ell}(x)]- r_{\ell}(x)]| &\leq h_n\sup_{x \in \mathcal{C}} \left|\int_{\R^d}K_d(t)\left(t_1 \frac{\partial r_{\ell}(\zeta)}{\partial x_1}+\dots+t_d \frac{\partial r_{\ell}(\zeta)}{\partial x_d}\right) dt\right|.
	\end{aligned}
	\end{equation*}
\end{proof}
\begin{lem} \label{lem2}
	Under hypotheses  \hyperref[H1]{\textbf{H1} and \hyperref[K1]{\textbf{K1-K3}}}, for $\ell=1,2$,  we have
	\begin{equation*}
	\sup_{x \in \mathcal{C}}|\widehat{r}_{\ell}(x)- \tilde{r}_{\ell}(x)| =O_{a.s.}\left(\left( \frac{log_2 n}{n}\right)^{1/2}\right)  \quad \text{as} \quad n \longrightarrow \infty.
	\end{equation*}
\end{lem}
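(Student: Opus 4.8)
The plan is to compare the two estimators summand by summand, isolating the effect of replacing the true censoring survival function $\overline{G}$ by the Kaplan--Meier estimator $\overline{G}_n$, and then to invoke the known almost sure rate of $\overline{G}_n$ under $\alpha$-mixing. First I would write both quantities with the common normalisation $(nh_n^d)^{-1}$ and subtract, which gives
\begin{equation*}
\widehat{r}_{\ell}(x)-\tilde{r}_{\ell}(x)=\frac{1}{nh_n^d}\sum_{i=1}^{n}\delta_i Y_i^{-\ell}\,\frac{\overline{G}(Y_i)-\overline{G}_n(Y_i)}{\overline{G}_n(Y_i)\,\overline{G}(Y_i)}\,K_d\!\left(\frac{x-X_i}{h_n}\right).
\end{equation*}
On the event $\{\delta_i=1\}$ we have $Y_i=T_i\leq\tau_F$, hence $\overline{G}(Y_i)\geq\overline{G}(\tau_F)>0$; moreover, since $\sup_{t\leq\tau_F}|\overline{G}_n(t)-\overline{G}(t)|\to 0$ almost surely, for every $n$ large enough $\inf_{t\leq\tau_F}\overline{G}_n(t)\geq\overline{G}(\tau_F)/2$ almost surely. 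Combining this with $\delta_i Y_i^{-\ell}\leq M$ from (\ref{majorant}), and bounding the non-censoring indicators by $1$ in the resulting kernel sum, I obtain, for $n$ large,
\begin{equation*}
\sup_{x\in\mathcal{C}}|\widehat{r}_{\ell}(x)-\tilde{r}_{\ell}(x)|\leq\frac{2M}{\overline{G}^{2}(\tau_F)}\Big(\sup_{t\leq\tau_F}|\overline{G}_n(t)-\overline{G}(t)|\Big)\Big(\sup_{x\in\mathcal{C}}\widehat{f}_X(x)\Big).
\end{equation*}

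Next I would bound the factor $\sup_{x\in\mathcal{C}}\widehat{f}_X(x)$. Under \textbf{H1}, \textbf{K1}--\textbf{K3} and the strong mixing assumption on $(X_i)$, the kernel density estimator $\widehat{f}_X$ is almost surely uniformly bounded on the compact $\mathcal{C}$: its bias is $O(h_n)$ uniformly by a change of variable and continuity of $f_X$, while the centred part $\sup_{x\in\mathcal{C}}|\widehat{f}_X(x)-\E\widehat{f}_X(x)|$ is $O_{a.s.}\big((\log n/(nh_n^d))^{1/2}\big)=o_{a.s.}(1)$, obtained by a blocking argument and an exponential inequality for $\alpha$-mixing sequences, the regularity hypothesis \textbf{K1} being used to pass from a finite $h_n$-net of $\mathcal{C}$ to the supremum over $\mathcal{C}$. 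Hence $\sup_{x\in\mathcal{C}}\widehat{f}_X(x)=O_{a.s.}(1)$.

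It then remains to control $\sup_{t\leq\tau_F}|\overline{G}_n(t)-\overline{G}(t)|$, for which I would invoke the law of the iterated logarithm for the Kaplan--Meier estimator of right-censored $\alpha$-mixing data due to Cai (1998), namely
\begin{equation*}
\sup_{t\leq\tau_F}|\overline{G}_n(t)-\overline{G}(t)|=O_{a.s.}\!\left(\left(\frac{\log_2 n}{n}\right)^{1/2}\right).
\end{equation*}
Substituting the last two estimates into the previous display yields the announced rate $O_{a.s.}\big((\log_2 n/n)^{1/2}\big)$, uniformly in $x\in\mathcal{C}$ and in $\ell\in\{1,2\}$. I expect the main obstacle to lie not in the algebra but in the two auxiliary facts behind the first display: the almost sure positivity of $\inf_{t\leq\tau_F}\overline{G}_n(t)$ for $n$ large, which rests crucially on the hypothesis $\overline{G}(\tau_F)>0$ together with the uniform consistency of $\overline{G}_n$; and the uniform boundedness $\sup_{x\in\mathcal{C}}\widehat{f}_X(x)=O_{a.s.}(1)$ in the dependent setting, which is where \textbf{H1} and the kernel conditions \textbf{K1}--\textbf{K3} genuinely enter. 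Once these are in hand, the lemma is immediate from the K-M rate.
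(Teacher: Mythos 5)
Your proposal is correct and follows essentially the same route as the paper: the same algebraic decomposition isolating $1/\overline{G}_n-1/\overline{G}$ in each summand, a bound by $\sup_{t\leq\tau_F}|\overline{G}_n(t)-\overline{G}(t)|$ times a kernel average, and then Cai's law of the iterated logarithm for the Kaplan--Meier estimator under $\alpha$-mixing. You are in fact slightly more careful than the paper on two points it glosses over, namely the almost sure lower bound on $\inf_{t\leq\tau_F}\overline{G}_n(t)$ and the almost sure uniform boundedness of the kernel sum, which the paper simply replaces by its expectation.
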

\begin{proof}
	\begin{equation*}
	\begin{aligned}
	| \widehat{r}_{\ell}(x)- \tilde{r}_{\ell}(x)| &= \left| \frac{1}{n h_n^d} \sum_{i=1}^n \delta_i Y_i^{-\ell}K_d(x-X_i) \left( \frac{1}{\bar{G}_n(Y_i)}-\frac{1}{\bar{G}(Y_i)}\right)\right|\\
	&= \frac{1}{n h_n^d} \left|  \sum_{i=1}^n  T_i^{-\ell}K_d(x-X_i) \left( \frac{1}{\bar{G}_n(T_i)}-\frac{1}{\bar{G}(T_i)}\right)\right|\\
	&\leq \frac{\sup_{t \leq \tau_F} \left|\bar{G}_n(t)-\bar{G}(t)\right|}{ \bar{G}_n(\tau_F) \bar{G}(\tau_F)} \frac{1}{n h_n^d}\sum_{i=1}^n  |T_i|^{-\ell}K_d(x-X_i).
	\end{aligned}
	\end{equation*}
	From \hyperref[Cai1]{Cai (2001)}, under Hypotheses   \hyperref[H1]{\textbf{H1}} and \hyperref[K1]{\textbf{K1-K3}} , we have for $\ell=1,2$
	\[	\sup_{x \in \mathcal{C}}| \widehat{r}_{\ell}(x)- \tilde{r}_{\ell}(x)| \leq \frac{M}{\bar{G}^2(\tau_F)} \E[h_n^{-d} K_d(x-X_1)] \sqrt{\frac{\log \log n}{n}}.\]
\end{proof}
To this step, we introduce the following lemma (\hyperref[f_v]{Ferraty and Vieu (2006)} Proposition A.11 ii), p.237).
\begin{lem}[Fuk-Nagaev]\label{lem_FN}
	Let $\{U_i, i\geq 1\}$ be a sequence of real rv's, with strong mixing coefficient $\alpha(n)=O(n^{-v})$, $v>1$ such that $\forall n \in \mathbb{N}$, $\forall i \in \mathbb{N}$, $1 \leq i \leq n$ $|U_i|< +\infty$. Then for each $\varepsilon>0$ and for each $r>1$
	\[\mathbb{P}\left( \left|\sum_{i=1}^{n}U_i\right|>\varepsilon \right) \leq C\left(1+\frac{\varepsilon^2}{r S_n^2}\right)^{-r/2}+\frac{n C}{r}\left(\frac{2r}{\varepsilon}\right)^{v+1}\]
	where $\displaystyle{S_n^2=\sum_{i,j}|\C(U_i,U_j)|}$.
\end{lem}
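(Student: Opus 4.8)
Since this lemma is Proposition A.11(ii) of \hyperref[f_v]{Ferraty and Vieu (2006)}, the cleanest option is to quote it directly; for completeness, here is the skeleton of the argument one would otherwise carry out. It rests on the classical Bernstein blocking technique combined with a coupling that reduces the dependent sum to an independent one, for which a Fuk--Nagaev bound is already available.

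The first step is to fix an integer block length $p \in \{1,\dots,n\}$, to be optimized at the end, and to split $\{1,\dots,n\}$ into consecutive blocks of size $p$, gathered alternately into an ``odd'' and an ``even'' group, so that $\sum_{i=1}^{n} U_i = \Sigma_{\mathrm{odd}} + \Sigma_{\mathrm{even}}$, where each of $\Sigma_{\mathrm{odd}}$ and $\Sigma_{\mathrm{even}}$ is a sum of at most $\lceil n/(2p)\rceil$ block sums $V_j = \sum_{i \in E_j} U_i$, consecutive retained block sums being separated by a gap block of length $p$. By the union bound it suffices to bound $\mathbb{P}(|\Sigma_{\mathrm{odd}}| > \varepsilon/2)$, and symmetrically for $\Sigma_{\mathrm{even}}$.

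Next, I would invoke a coupling lemma for strong mixing (Bradley's or Berbee's lemma): since the retained block sums are measurable with respect to $\sigma$-algebras separated by gaps of length $p$, one can construct independent variables $V_j^{*}$, each with the same law as $V_j$, such that $\mathbb{P}(V_j \ne V_j^{*}) \le C\,\alpha(p)$; summing over the at most $n/p$ retained blocks and using $\alpha(p) = O(p^{-v})$, the ``good'' event on which all $V_j = V_j^{*}$ fails with probability $O(n\,p^{-v-1})$. On that good event $\Sigma_{\mathrm{odd}}$ equals a sum of independent bounded variables, to which the classical Fuk--Nagaev inequality for independent summands applies, producing a term $C\bigl(1+\varepsilon^2/(r\,W_n^2)\bigr)^{-r/2}$ with $W_n^2 := \sum_j \V(V_j)$. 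Since $\V(V_j) = \sum_{i,i' \in E_j}\C(U_i,U_{i'})$, one has $W_n^2 \le S_n^2 = \sum_{i,j}|\C(U_i,U_j)|$, which gives the first term of the claimed bound; choosing $p$ of order $r/\varepsilon$ then turns the coupling error $O(n\,p^{-v-1})$ into $\frac{nC}{r}\bigl(\frac{2r}{\varepsilon}\bigr)^{v+1}$, the second term.

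The delicate point is the interface between these steps: one must check the hypotheses of the mixing coupling lemma for the block sums $V_j$ (their joint law differs from the product of the marginals by at most a multiple of $\alpha(p)$ in total variation, thanks to the separating gaps), keep explicit track of how the coupling error, the variance proxy $W_n^2 \le S_n^2$ and the independent Fuk--Nagaev bound depend jointly on $p$ and $r$, and finally optimize over $p$ so that the two announced terms appear with exactly these exponents. As all of this is already done in \hyperref[f_v]{Ferraty and Vieu (2006)}, Proposition A.11(ii), it suffices to cite that reference.
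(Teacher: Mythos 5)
Your proposal coincides with the paper's treatment: the lemma is simply quoted from Ferraty and Vieu (2006), Proposition A.11(ii), p.~237, and no proof is given in the paper either, so citing that reference is exactly what is done here. Your supplementary sketch of the blocking/coupling route is a reasonable outline but is not required (and note, as a minor caveat, that Berbee's coupling lemma requires $\beta$-mixing; for $\alpha$-mixing one must instead use Bradley's coupling lemma or, as in Rio's original proof, an argument via quantile functions).
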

In the following lemma we establish the asymptotic expression for the variance and covariance of the estimator $\widehat{m}(x)$.
\begin{lem} \label{lem4}
	Under hypotheses \hyperref[H2]{\textbf{H2}}, \hyperref[H3]{\textbf{H3}}, \hyperref[K1]{\textbf{K1-K3}} and \hyperref[D1]{\textbf{D1-D3}}, we have for $\ell=1,2$
	\begin{equation*}
	\sup_{x \in \mathcal{C}}|\tilde{r}_{\ell}(x)-\mathbb{E}[\tilde{r}_{\ell}(x)]| =O_{a.s.}\left(\sqrt{\frac{\log n}{nh_n^d}}+\sqrt{\frac{\log n}{n h_n^{2d/v}}}\right)  \quad as \quad n \longrightarrow \infty.
	\end{equation*}
\end{lem}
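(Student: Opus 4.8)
The plan is to derive the rate by the standard compactness plus exponential-inequality scheme, the Fuk--Nagaev bound of Lemma~\ref{lem_FN} playing the role of the exponential inequality. Writing
\[
\tilde r_\ell(x)-\E[\tilde r_\ell(x)]=\frac{1}{nh_n^{d}}\sum_{i=1}^{n}\big(\Delta_i(x)-\E[\Delta_i(x)]\big),\qquad \Delta_i(x)=\frac{\delta_iY_i^{-\ell}}{\bar G(Y_i)}K_d\!\Big(\frac{x-X_i}{h_n}\Big),
\]
note that on $\{\delta_i=1\}$ one has $Y_i=T_i\le\tau_F$, so $\bar G(Y_i)\ge\bar G(\tau_F)>0$ and, by (\ref{majorant}), the summands are uniformly bounded by $M\|K_d\|_\infty/\bar G(\tau_F)$. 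I would first cover $\mathcal{C}$ by $N_n=O(\ell_n^{-d})$ balls $B(x_k,\ell_n)$ and, for $x\in B(x_k,\ell_n)$, bound $|\tilde r_\ell(x)-\E\tilde r_\ell(x)|$ by the sum of $|\tilde r_\ell(x)-\tilde r_\ell(x_k)|$, $|\tilde r_\ell(x_k)-\E\tilde r_\ell(x_k)|$ and $|\E\tilde r_\ell(x_k)-\E\tilde r_\ell(x)|$. The first and third terms are purely deterministic: by the Hölder hypothesis \textbf{K1} on $K_d$ each is at most $Ch_n^{-d}(\ell_n/h_n)^{\gamma}$, so choosing $\ell_n$ polynomially small (so that $h_n^{-d}(\ell_n/h_n)^{\gamma}=O(\sqrt{\log n/(nh_n^{d})})$, the lower bound on $h_n$ in \textbf{H3} keeping $N_n$ polynomial in $n$) they are absorbed into the announced rate.

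The heart of the argument is the centred term at a fixed centre $x_k$. I would apply Lemma~\ref{lem_FN} to $U_i=\Delta_i(x_k)-\E[\Delta_i(x_k)]$ with truncation parameter $r=r_n\asymp\log n$ and threshold $\varepsilon=\varepsilon_n$, which requires an estimate of $S_n^2=\sum_{i,j}|\C(U_i,U_j)|$. For the diagonal I would use the censoring bias-correction identity $\E\!\big[\tfrac{\delta\,\psi(Y)}{\bar G(Y)}\,\big|\,X\big]=\E[\psi(T)|X]$, which converts $\E[\Delta_i(x_k)^{2}]$ into $\int_{\R^{d}}K_d^{2}\big(\tfrac{x_k-u}{h_n}\big)\theta_{2\ell}(u)\,du=O(h_n^{d})$ by \textbf{D2} and \textbf{K3}; hence $\sum_i\V(U_i)=O(nh_n^{d})$. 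For the off-diagonal I would bound $|\C(U_i,U_j)|$ in two ways: globally by $Ch_n^{2d}$ using \textbf{D3} (each kernel integrates to $O(h_n^{d})$ and the difference between the joint density of $(X_i,X_j)$ and the product of its marginals is bounded), and by $C\alpha(|i-j|)=O(|i-j|^{-\nu})$ via the covariance inequality for bounded strongly mixing variables; taking the minimum and summing over $|i-j|$, with the split at $|i-j|\asymp h_n^{-2d/\nu}$, gives $\sum_{i\ne j}|\C(U_i,U_j)|=O\big(nh_n^{2d(\nu-1)/\nu}\big)=O\big(nh_n^{2d}/h_n^{2d/\nu}\big)$. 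Thus $S_n^{2}=O(nh_n^{d})+O(nh_n^{2d}/h_n^{2d/\nu})$, and \emph{the two summands are precisely what generate the two terms of the rate}: taking $\varepsilon_n=c\sqrt{r_n\,nh_n^{d}}+c\sqrt{r_n\,nh_n^{2d}/h_n^{2d/\nu}}$ makes the first Fuk--Nagaev term $\le Cn^{-p}$ for any prescribed $p$, and dividing $\varepsilon_n$ by $nh_n^{d}$ returns exactly $\sqrt{\log n/(nh_n^{d})}+\sqrt{\log n/(nh_n^{2d/\nu})}$.

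It then remains to control the second Fuk--Nagaev term $\tfrac{nC}{r_n}\big(2r_n/\varepsilon_n\big)^{\nu+1}$; with $\varepsilon_n\gtrsim\sqrt{nh_n^{d}\log n}$ and $r_n\asymp\log n$ this is $O\big((\log n)^{(\nu-1)/2}\,n^{-(\nu-1)/2}\,h_n^{-d(\nu+1)/2}\big)$, and the purpose of \textbf{H2}--\textbf{H3} (together with $\nu>3$ and the free exponent $\psi$) is exactly to ensure that this quantity, multiplied by the polynomial factor $N_n$, is the general term of a convergent series. Summing the resulting bound over $k\le N_n$ and over $n$ and invoking the Borel--Cantelli lemma yields $\sup_{x\in\mathcal C}|\tilde r_\ell(x)-\E[\tilde r_\ell(x)]|=O_{a.s.}\big(\sqrt{\log n/(nh_n^{d})}+\sqrt{\log n/(nh_n^{2d/\nu})}\big)$, which is the claim (note that the second term is in fact smaller than the first, and is retained only to exhibit how the dependence enters, in the spirit of the Remark following Theorem~\ref{theorem}). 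The main obstacle is the bookkeeping for $S_n^{2}$: pushing the off-diagonal sum down to the order $nh_n^{2d}/h_n^{2d/\nu}$, which is where \textbf{D3} and the mixing rate $\nu$ interact and where, as the authors stress, the covariance term is genuinely kept rather than assumed away.
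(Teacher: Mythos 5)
Your proposal is correct and follows essentially the same route as the paper: a finite covering of $\mathcal{C}$ with a polynomial number of balls whose radius is tuned so the Hölder oscillation of the kernel is absorbed into the rate, the Fuk--Nagaev inequality at the grid points with $r\asymp\log n$, the variance bound $O(nh_n^{d})$ from \textbf{D2}/\textbf{K3}, the Masry-type split of the covariance sum at $\beta_n\asymp h_n^{-2d/\nu}$ combining the \textbf{D3} bound $O(h_n^{2d})$ with the Davydov--Rio mixing bound to get $S_n^2=O(nh_n^{d})+O(nh_n^{2d(\nu-1)/\nu})$, and finally \textbf{H3} plus Borel--Cantelli to sum the second Fuk--Nagaev term over the grid and over $n$. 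The only (immaterial) divergences are your choice $r_n\asymp\log n$ where the paper takes $r=(\log n)^{1+b}$, and your correct observation that the diagonal term involves $\theta_{2\ell}$.
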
	
\begin{proof}
Recall that $\mathcal{C}$ is a compact set, then it admits a covering $\mathcal{S}$ by a finite number $s_n$ of balls $\mathrm{B}_k(x^*_k,a_n^d)$ centred at $x^*_k=(x^*_{1,k},\dots,x^*_{d,k}), 1 \leq k \leq s_n $. Then for all $x \in \mathcal{C}$ there exists $ k $ such that $\norm{x-x^*_k}\leq a_n^d$ where $a_n$ verifies $a_n^{d\gamma}=h_n^{d\left(\gamma +\frac{1}{2}\right)}n^{-\frac{1}{2}}$ with $\gamma$ is the Lipshitz condition in hypothesis \hyperref[K1]{\textbf{K1}}. Since $\mathcal{C}$ is bounded then there exist a constant $M>0$ such that $s_n \leq M a_n^{-d}$.\\
	Let for $x \in \mathcal{C}$ and $\ell=1,2$ the given set
	\begin{equation*}
	\mathcal{A}_{\ell,i}(x)=(nh_n^d)^{-1} \left[\frac{\delta_i Y_i^{-\ell}}{\bar{G}(Y_i)}K_d(x-X_i)-\mathbb{E}\left(\frac{\delta_i Y_i^{-\ell}}{\bar{G}(Y_i)}K_d(x-X_i)\right)\right],
	\end{equation*}
	then
	\[\sum_{i=1}^{n}\mathcal{A}_{\ell,i}(x)=\tilde{r}_{\ell}(x)-\mathbb{E}[\tilde{r}_{\ell}(x)],\]
	that we decompose as follows
	\begin{equation*}
	\begin{aligned}
	\sum_{i=1}^{n}\mathcal{A}_{\ell,i}(x)&=\left\{ \left(\tilde{r}_{\ell}(x)-\tilde{r}_{\ell}(x^*_k)\right)-\left( \E\left[\tilde{r}_{\ell}(x)\right]- \E\left[\tilde{r}_{\ell}(x^*_k)\right] \right)\right\} + \left(\tilde{r}_{\ell}(x^*_k)-\E\left[\tilde{r}_{\ell}(x^*_k)\right]\right) \\
	&:=\sum_{i=1}^{n}\tilde{\mathcal{A}}_{\ell,i}(x)+\sum_{i=1}^{n}\mathcal{A}_{\ell,i}(x^*_k),
	\end{aligned}    
	\end{equation*}
	from where 
	\begin{equation*}
	\begin{aligned}
	\sup_{x \in \mathcal{C}} \left|\sum_{i=1}^{n}\mathcal{A}_{\ell,i}(x)\right| &\leq \max_{1 \leq k \leq s_n} \sup_{x \in B_k}\left|\sum_{i=1}^{n}\tilde{\mathcal{A}}_{\ell,i}(x)\right|+\max_{1 \leq k \leq s_n}\left|\sum_{i=1}^{n}\mathcal{A}_{\ell,i}(x^*_k)\right|\\
	&:=\mathcal{B}_1+\mathcal{B}_2.
	\end{aligned}    
	\end{equation*}
	We start by treating the first term $\mathcal{B}_1$
	\begin{equation*}
	\begin{aligned}
	\left|\sum_{i=1}^{n}\tilde{\mathcal{A}}_{\ell,i}(x)\right|&=\left| \left[\tilde{r}_{\ell}(x)-\tilde{r}_{\ell}(x^*_k)\right]- \E\left[\tilde{r}_{\ell}(x)-\tilde{r}_{\ell}(x^*_k)\right]\right|\\
	&=\frac{1}{nh_n^d}\sum_{i=1}^{n} \left| \frac{\delta_i Y_i^{-\ell}}{\bar{G}(Y_i)}\left(K_d(x-X_i)
	-K_d(x^*_k-X_i)\right)\right. \\
	&+ \left. \frac{1}{h_n^d}\E \left[\frac{\delta_1 Y_1^{-\ell}}{\bar{G}(Y_1)}\left(K_d(x-X_1) -K_d(x^*_k-X_1)\right)\right]\right|\\
	&\leq \frac{1}{nh_n^d} \sum_{i=1}^{n} \frac{ |T_i|^{-\ell}}{\bar{G}(T_i)}\left|K_d(x-X_i)-K_d(x^*_k-X_i) \right| \\
	&+ \frac{1}{h_n^d}\mathbb{E}\left[\frac{ |T_1|^{-\ell}}{\bar{G}(T_1)}\left|K_d(x-X_1)-K_d(x^*_k-X_1)\right|\right]\\
	&:=\mathcal{D}_{1,\ell}(x)+\mathcal{D}_{2,\ell}(x), 
	\end{aligned}
	\end{equation*}   
	with
	\begin{equation*}
	\begin{aligned}
	\sup_{x \in B_k}\mathcal{D}_{1,\ell}(x)& \leq \frac{ M}{\bar{G}(\tau_F)}\frac{1}{h_n^d}\sup_{x \in \mathcal{C}}\left|K_d(x-X_i)-K_d(x^*_k-X_i)\right|\\
	&\leq \frac{M}{h_n^d\bar{G}(\tau_F)}\norm{\frac{x-x^*_k}{h_n}}^{\gamma}\\
	& \leq \frac{C a_n^{d\gamma}}{h_n^{d+\gamma}}.
	\end{aligned}    
	\end{equation*}
	In the same manner, we have, 
	\[\sup_{x \in B_k}\mathcal{D}_{2,\ell}(x) \leq \frac{C a_n^{d\gamma}}{h_n^{d+\gamma}},\]
	then 
	\[\sup_{x \in B_k} \left|\sum_{i=1}^{n}\tilde{\mathcal{A}}_{\ell,i}(x)\right|=\sup_{x \in B_k}\mathcal{D}_{1,\ell}(x)+\sup_{x \in B_k}\mathcal{D}_{2,\ell}(x) \leq \frac{2 C a_n^{d\gamma}}{h_n^{d+\gamma}} \leq \frac{C h_n^{d(\gamma+\frac{1}{2})} n^{-\frac{1}{2}}}{h_n^{d+\gamma}}=\frac{C}{\sqrt{n h_n^{d}}} h_n^{\gamma(d-1)},\]
	which allows to 
	\begin{equation}
	\mathcal{B}_1=\max_{1 \leq k \leq s_n} \sup_{x \in B_k}\left|\sum_{i=1}^{n}\tilde{\mathcal{A}}_{\ell,i}(x)\right|=O\left(\frac{1}{\sqrt{n h_n^d}}\right).
	\end{equation}	
\end{proof}
\noindent To proceed to the determination of the second term $\mathcal{B}_2$, we will use \hyperref[lem_FN]{Lemma 3}.
Let
\[U_i=U_{i,k}=nh_n^d \mathcal{A}_{\ell,i}(x^*_k)=\frac{\delta_i Y_i^{-\ell}}{\bar{G}(Y_i)}K_d(x^*_k-X_i)-\mathbb{E}\left[\frac{\delta_i Y_i^{-\ell}}{\bar{G}(Y_i)}K_d(x^*_k-X_i)\right].\]
To apply \hyperref[lem_FN]{Lemme 4}, we have to calculate first
\begin{equation}\label{Sn}
\begin{aligned}
S_n^2&=\sum_i \sum_j|\C(U_i,U_j)|=\sum_{i\neq j}|\C(U_i,U_j)|+n Var (U_1)\\
&=: \mathcal{V}+n \V(U_1).
\end{aligned}
\end{equation}
On the one hand, we have to start by considering 
\begin{equation*}
\begin{aligned}
\V (U_1)&= Var \left[\frac{\delta_1 Y_1^{-\ell}}{\bar{G}(Y_1)}K_d(x^*_k-X_1)\right]\\
&=\E\left[\frac{\delta_1 Y_1^{-2 \ell}}{\bar{G}^2(Y_1)}K_d^2(x^*_k-X_1)\right]- \E^2\left[\frac{\delta_1 Y_1^{-\ell}}{\bar{G}(Y_1)}K_d(x^*_k-X_1)\right]\\
&=: \mathcal{R}_1-\mathcal{R}_2.
\end{aligned}    
\end{equation*}
For $\mathcal{R}_1$, using the conditional expectation propreties and a change of variables, we get 
\begin{equation*}
\mathcal{R}_1  \leq  h_n^d \int_{\R^d} K_d^2(t) \theta_{ \ell}(x^*_k-h_nt)dt,   
\end{equation*}
by a Taylor expansion and from Hypotheses \hyperref[D2]{\textbf{D2}} and \hyperref[K3]{\textbf{K3}}, we obtain
\begin{equation}\label{R1}
\mathcal{R}_1=O(h_n^d).
\end{equation}
For $\mathcal{R}_2$, under hypothesis \hyperref[D1]{\textbf{D1}}, we have
\begin{equation*}
\sqrt{\mathcal{R}_2} = \int_{\R^d} K_d(x^*_k-u) r_{ \ell}(u)du,  
\end{equation*}
using again a change of variable and a Taylor expansion around $x^*_k$, we have 
\begin{equation}\label{R2}
\mathcal{R}_2=O(h_n^{2d}).
\end{equation}
Then from (\ref{R1}) and (\ref{R2}), we get
\begin{equation} \label{nVar}
n \V(U_1)=n(\mathcal{R}_1-\mathcal{R}_2)=O(nh_n^d).
\end{equation}
On the other hand, 
\begin{equation*}
\begin{aligned}
|\C(U_i,U_j)|&=|\E[U_iU_j]|\\
&=\left|\E\left[\frac{\delta_i \delta_j Y_i^{-\ell}Y_j^{-\ell}}{\bar{G}(Y_i)\bar{G}(Y_j)} K_d(x^*_k-X_i) K_d(x^*_k-X_j) \right]\right.\\
&- \left.\E\left[\frac{\delta_i Y_i^{- \ell}}{\bar{G}(Y_i)}K_d(x^*_k-X_i) \right] \E\left[ \frac{\delta_j Y_j^{- \ell}}{\bar{G}(Y_j)}K_d(x^*_k-X_j)\right]\right|\\
&\leq h_n^{2d} \int_{\R^d}\int_{\R^d}K_d (t) K_d (s) |f_{i,j}(x^*_k-h_nt,x^*_k-h_ns)- f_{i}(x^*_k-h_n t) f_{j}(x^*_k-h_n s)| dt ds,
\end{aligned}    
\end{equation*}
which yeilds, under Hypothesis \hyperref[D3]{\textbf{D3}}
\begin{equation} \label{cov}
|\C(U_i,U_j)|=O(h_n^{2d}),
\end{equation}
uniformly on $i$ and $j$.\\
Now to evaluate the asymptotic behaviour of $\mathcal{V}$ following the decomposition of \hyperref[Masry]{Marsy (1986)}, we define the sets : 
\[E_1=\{ (i,j) \; \text{such that } \; 1 \leq|i-j|\leq \beta_n\}
\;\text{and}\; E_2=\{ (i,j) \; \text{such that } \; \beta_n+1 \leq|i-j|\leq n-1\}\]
where $\beta_n \rightarrow \infty$ as $n \rightarrow \infty $ at a slow rate, that is  $\beta_n = o(n)$. Let $\mathcal{V}_1$ and $\mathcal{V}_2$ be the sums of covariances over $E_1$ and $E_2$, respectively.
\[ \mathcal{V}= \sum_{i=1}^n \sum_{E_1} |\C (U_i,U_j|+ \sum_{i=1}^n \sum_{E_2} |\C (U_i,U_j| := \mathcal{V}_1+ \mathcal{V}_2.\]
We then get, from (\ref{cov})
\[\mathcal{V}_1= \sum_{i=1}^n \sum_{E_1} |\C (U_i,U_j|= \sum_{i=1}^n \sum_{1 \leq|i-j|\leq \beta_n} h_n^{2d}= O(n h_n^{2d}\beta_n).\]
For $\mathcal{V}_2$, we use the modified Davydov inequality for mixing processes (see \hyperref[Rio]{Rio (2000)}). This leads, for all $i \neq j$, to
\[|\C(U_i,U_j)| \leq C \alpha(|i-j|),\]
we then get,
\begin{equation*}
\begin{aligned}
\mathcal{V}_2 &\leq C \sum_{i=1}^n \sum_{\beta_n+1 \leq|i-j|\leq n-1} |i-j|^{-v}\\
&=O(n \beta_n^{(1-v)}).
\end{aligned}
\end{equation*}
Choosing $\beta_n = h_n^{-\frac{2d}{v}}$ permits to get,
\begin{equation}\label{Sn*}
\mathcal{V} = \mathcal{V}_1 + \mathcal{V}_2 = O(n h_n^{2d(v-1)/v}).
\end{equation}
Finally, from (\ref{Sn}),(\ref{Sn*}) and (\ref{nVar}) we obtain
\[S^2_n= O(n h_n^d)+O(n h_n^{2d(v-1)/v})=nh_n^d(1+h_n^{d(v-2)/v}).\]
Now, that all the calculus are done. It is convenient to apply the inequality in \hyperref[lem_FN]{Lemma 4} with $\varepsilon>0$
\begin{align*}
\mathbb{P} \left[\left|\sum_{i=1}^n \mathcal{A}_{\ell,i}(x^*_k)\right|> \varepsilon \right] &= \mathbb{P} \left[\left|\sum_{i=1}^n U_i\right|> nh_n^d \varepsilon \right]\nonumber\\ 
&\leq C \left(1+\frac{nh_n^d \varepsilon^2}{r (1+h_n^{d(v-2)/v})}\right)^{-\frac{r}{2}}+ nCr^{-1} \left( \frac{r}{nh_n^d \varepsilon}\right)^{v+1}\nonumber\\ 
&=:C(\mathcal{E}_1+\mathcal{E}_2).
\end{align*}
Taking $\varepsilon=\varepsilon_0 \left(\sqrt{\frac{\log n }{nh_n^d}}+\sqrt{\frac{\log n}{n h_n^{2d/v}}}\right)=:\varepsilon_n$ with $\varepsilon_0>0$, we get for the first part 
\begin{equation}\label{EE1}
\mathcal{E}_1=\left(1+C\frac{\varepsilon_0^2 \log n }{r}\right)^{-\frac{r}{2}}.
\end{equation}
By choosing $r=(\log n)^{1+b}$ with $b>0$, (\ref{EE1}) becomes 
\begin{equation*}
\mathcal{E}_1=\left(1+C\varepsilon_0^2 (\log n)^{-b}\right)^{-\frac{(\log n)^{1+b}}{2}}
\end{equation*}
by  taking logarithm and using a Taylor expansion of $\log(1+x)$ we get
\begin{equation*}\label{E1}
\log \mathcal{E}_1 \simeq (\log n)^{-\frac{C \varepsilon_0^2 }{2}}
\end{equation*}
which gives
\begin{equation} \label{Res_E1} \mathcal{E}_1=n^{-\frac{C \varepsilon_0^2 }{2}}. \end{equation}
For the same choice of $\varepsilon$ and $r$, we have 
\begin{equation}
\mathcal{E}_2 \simeq n (\log n)^{v(1+b)} \varepsilon_0^{-(v+1)} (nh_n^d  \log n )^{-\frac{v+1}{2}}.
\end{equation}
By taking again the inequality of Fuk-Nagaev and using $\mathbb{P}(\cup_i A_i)=\sum_i \mathbb{P}( A_i)$, we can write 
\begin{align}\label{R1-R2}
\mathbb{P} \left[\max_{1 \leq k \leq S_n}\left|\sum_{i=1}^n \mathcal{A}_{\ell,i}(x^*_k)\right|> \varepsilon_n \right]
&\leq M a_n^{-d} C \left(n^{-\frac{C \varepsilon_0^2 }{2}} + n (\log n)^{v(1+b)} \varepsilon_0^{-(v+1)} (nh_n^d  \log n )^{-\frac{v+1}{2}} \right) \nonumber \\
&\leq M  h_n^{-d(1+\frac{1}{2 \gamma})}  n^{\frac{1}{2 \gamma}} C \left(n^{-\frac{c \varepsilon_0^2 }{2}} + n (\log n)^{v(1+b)} \varepsilon_0^{-(v+1)} (nh_n^d  \log n )^{-\frac{v+1}{2}} \right) \nonumber \\
&\leq M C   n^{\frac{1}{2 \gamma}-\frac{C \varepsilon_0^2 }{2}} h_n^{-d(1+\frac{1}{2 \gamma})} \nonumber \\ 
&+ M C \varepsilon_0^{-(v+1)}   n^{1+\frac{1}{2 \gamma}} h_n^{-d(1+\frac{1}{2 \gamma})} (\log n)^{v(1+b)}  (nh_n^d  \log n )^{-\frac{v+1}{2}} \nonumber \\
&=: M C (\mathcal{R}_1+ \varepsilon_0^{-(v+1)} \mathcal{R}_2).
\end{align}
We have from hypothesis \hyperref[H3]{\textbf{H3}}
\begin{align}
\mathcal{R}_2 & \leq C n^{1+\frac{1}{2 \gamma}-\frac{\nu+1}{2}} h_n^{-d(1+\frac{1}{2 \gamma}+\frac{\nu+1}{2})} (\log n)^{\nu(1+b)-\frac{\nu+1}{2}} \nonumber \\
&\leq C n^{1+\frac{1}{2 \gamma}-\frac{\nu+1}{2}} n^{-\frac{(3-\nu)}{2}-\psi d \left[\frac{\gamma(\nu+1)+2\gamma+1}{2\gamma}\right]} (\log n)^{\nu(1+b)-\frac{\nu+1}{2}} \nonumber \\
&\leq C n^{-1+\frac{1-\psi d \left[\gamma(\nu+3)+1\right]}{2\gamma}} (\log n)^{\nu(1+b)-\frac{\nu+1}{2}}. \nonumber \\
\end{align}
Then, for an appropriate choice of $\psi$, $\mathcal{R}_2$ is the general term of a convergent series. In the same way, we can choose $\varepsilon_0$ such that $\mathcal{R}_1$ is the general term of convergent series. Finally, applying Borel-Cantelli's lemma to (\ref{R1-R2}) gives the result.
\begin{remark}
	The parameter $\psi$ of the hypothesis \hyperref[H3]{\textbf{H3}} can be chosen such as :
	\begin{equation}
	\psi > \frac{1}{\gamma(\nu+3)+1}.
	\end{equation}
\end{remark}
This condition ensures the convergence of the series of \hyperref[lem4]{Lemma 4}.
\begin{proof}\label{theo} {\it of} {\bf Theorem 1.}
	For $x \in \mathbb{R}^d$,  we consider the following decomposition : 
	\begin{multline*}
	\widehat{m}(x)-m(x) =\frac{1}{\widehat{r}_{2}(x)} \left\{ \Big[\big(\widehat{r}_{1}(x)-\tilde{r}_{1}(x)\big) + (\tilde{r}_{1}(x)-\mathbb{E}[\tilde{r}_{1}(x)])+ (\mathbb{E}[\tilde{r}_{1}(x)]-r_{1}(x))\Big] \right.\\ 
	+ r(x) \left. \Big[( \tilde{r}_{2}(x)-\widehat{r}_{2}(x))+( \mathbb{E}[\tilde{r}_{2}(x)]-\tilde{r}_{2}(x))+  (r_{2}(x)-\mathbb{E}[\tilde{r}_{2}(x)])\Big] \right\}
	\end{multline*}
	which by triangle inequality, we have
	\begin{multline}\label{ineq}
	\sup_{x \in \mathcal{C}} | \widehat{m}(x)-m(x) |\\
	\leq \frac{1}{\inf_{x \in \mathcal{C}}|\widehat{r}_{2}(x)|} \left\{\sup_{x \in \mathcal{C}} \Big[|\widehat{r}_{1}(x)-\tilde{r}_{1}(x)| + |\tilde{r}_{1}(x)-\mathbb{E}[\tilde{r}_{1}(x)]|+ | \mathbb{E}[\tilde{r}_{1}(x)]-r_{1}(x)|\Big] \right.\\ 
	+ \sup_{x \in \mathcal{C}}|r(x)| \left. \Big[|\tilde{r}_{2}(x)-\widehat{r}_{2}(x)|+| \mathbb{E}[\tilde{r}_{2}(x)]-\tilde{r}_{2}(x)|+ | r_{2}(x)-\mathbb{E}[\tilde{r}_{2}(x)]|\Big] \right\}.
	\end{multline}
	Then from the \hyperref[lem1]{Lemmas 1-4} in conjunction with the inequality (\ref{ineq}) conclude the proof.
\end{proof}



\end{document}